\documentclass[9pt]{amsart}

\usepackage[T1]{fontenc}
\usepackage[utf8]{inputenc}
\usepackage{amsmath,amssymb,amsthm,mathtools}
\usepackage[a4paper,margin=3cm]{geometry}
\usepackage{booktabs}
\usepackage{array}
\usepackage{tikz}
\usetikzlibrary{calc}
\usepackage[colorlinks=true,linkcolor=blue,citecolor=blue,urlcolor=blue]{hyperref}

\DeclareMathOperator{\per}{per}
\DeclareMathOperator{\perfmat}{perfmat}
\DeclareMathOperator{\maxcut}{maxcut}
\DeclareMathOperator{\bip}{bip}
\DeclareMathOperator{\oct}{oct}

\newtheorem{theorem}{Theorem}[section]
\newtheorem{lemma}[theorem]{Lemma}
\newtheorem{corollary}[theorem]{Corollary}
\newtheorem{proposition}[theorem]{Proposition}
\newtheorem{problem}[theorem]{Problem}

\theoremstyle{definition}
\newtheorem{definition}[theorem]{Definition}
\newtheorem{example}[theorem]{Example}

\theoremstyle{remark}
\newtheorem{remark}[theorem]{Remark}

\newcommand{\PM}{\mathcal{PM}}
\newcommand{\Ccal}{\mathcal{C}}
\newcommand{\Jcal}{\mathcal{J}}

\begin{document}

\title[Odd-cycle defects in the Alon--Friedland bound]{Odd-Cycle Defects in the Alon--Friedland Bound}

\author{Mohsen Aliabadi}
\address{Department of Mathematics, Clayton State University, Morrow, GA, USA}
\email{mohsenaliabadi@gmail.com}
\email{mohsenaliabadi@clayton.edu}

\author{Elliot Krop}
\address{Department of Mathematics, Clayton State University, Morrow, GA, USA}
\email{elliotkrop@clayton.edu}

\subjclass[2020]{Primary 05C70; Secondary 05C38, 15A15, 05C31}
\keywords{perfect matching, permanent, bipartization, odd-cycle transversal, max-cut, Bregman--Minc inequality, Alon--Friedland bound, odd cycle, cycle cover, derangement}

\begin{abstract}
Let \(G\) be a finite simple graph.  We study perfect matchings through two complementary viewpoints: reductions to bipartite permanent terms and the directed cycle covers counted by the ordinary adjacency permanent.  The main identity is an explicit odd-cycle-indexed form of the classical cycle-cover expansion: it separates \(\perfmat(G)^2\), the even-cycle-cover contribution coming from superposing two perfect matchings, from the contribution of cycle covers containing odd cycles.  This gives a nonnegative odd-cycle defect \(\delta_{\mathrm{odd}}(G)\).  Combining the identity with the Bregman--Minc inequality yields a structural refinement of the Alon--Friedland degree-sequence bound.  We prove product, positivity, and fractional-perfect-matching interpretations for the defect; show that for \(K_{2n}\) the defect asymptotically accounts for almost the entire Bregman--Minc target; and derive from this a derangement identity.  We also study near equality in the Alon--Friedland bound: we classify all one-edge perturbations of the extremal graphs, record the uniform obstruction coming from \(K_4\), and formulate a sharp bounded-degree gap problem whose two natural candidate extremal graphs cross between maximum degrees \(9\) and \(10\).
\end{abstract}

\maketitle

\section{Introduction and notation}

All graphs in this note are finite and simple.  For background on perfect matchings we refer to Lov\'asz and Plummer \cite{LovaszPlummer}.  If \(G\) is a graph, then \(V(G)\) and \(E(G)\) denote its vertex and edge sets, \(A(G)\) denotes its ordinary adjacency matrix, \(d_G(v)\) denotes the degree of a vertex \(v\), \(N_G(v)\) denotes the neighborhood of \(v\), and \(\Delta(G)\) denotes the maximum degree of \(G\).

A \emph{perfect matching} in \(G\) is a set of pairwise disjoint edges covering every vertex.  We write
\[
        \PM(G)=\{\text{perfect matchings of }G\},
        \qquad
        \perfmat(G)=|\PM(G)|.
\]
The empty graph is understood to have exactly one perfect matching.

If \(B=(b_{ij})\) is an \(n\times n\) matrix, its permanent is
\[
        \per(B)=\sum_{\sigma\in S_n}\prod_{i=1}^n b_{i,\sigma(i)},
\]
where \(S_n\) is the symmetric group on \(\{1,\dots,n\}\).  If \(G\) is a bipartite graph with bipartition \(X\sqcup Y\), we call \(G\) \emph{balanced} if \(|X|=|Y|\), and \emph{unbalanced} otherwise.  If \(G\) is balanced bipartite and \(B\) is its bipartite adjacency matrix, then
\[
        \perfmat(G)=\per(B).
\]
On the other hand, \(\per(A(G))\) counts the directed cycle covers of \(G\).  The square \(\perfmat(G)^2\), rather than \(\perfmat(G)\) itself, is the natural matching quantity in this setting because an ordered pair of perfect matchings superposes to a spanning union of doubled edges and even alternating cycles.

The Alon--Friedland bound \cite{AF} states that if \(G\) has no isolated vertices, then
\begin{equation}\label{eq:AF}
        \perfmat(G)\le U(G),
        \qquad
        U(G)=\prod_{v\in V(G)}(d_G(v)!)^{1/(2d_G(v))}.
\end{equation}
Moreover equality holds if and only if \(G\) is a disjoint union of balanced complete bipartite graphs \cite{AF}.  The estimate follows from the Bregman--Minc inequality \cite{Bregman,Minc}; see also the entropy proof of Radhakrishnan \cite{Radhakrishnan}.  Namely, if \(B\) is a square \(0\)-\(1\) matrix with row sums \(r_1,\dots,r_n\), then
\begin{equation}\label{eq:bregman}
        \per(B)\le \prod_{i=1}^n (r_i!)^{1/r_i},
\end{equation}
with the convention that the right-hand side is zero if some \(r_i=0\).

This note isolates two exact decompositions behind \eqref{eq:AF}.  The first decomposes \(\perfmat(G)\) after deleting edges or vertices to make the graph bipartite.  The second decomposes \(\per(A(G))\) according to the odd cycles appearing in a directed cycle cover.  These decompositions lead to an explicit nonnegative odd-cycle defect term in the Alon--Friedland bound.

The second decomposition should be read in relation to the classical proof of Alon and Friedland.  The permanent \(\per(A(G))\) counts directed cycle covers, and \(\perfmat(G)^2\) counts ordered pairs of perfect matchings, equivalently the even-cycle-cover part obtained by superposing two perfect matchings; this is precisely the mechanism used in \cite{AF} and in Galvin's exposition of the entropy-counting method \cite[Sec.~5.3]{Galvin}.  Thus we do not claim novelty for the underlying cycle-cover interpretation.  The point of Theorem~\ref{thm:odd-expansion} is to package the remaining terms explicitly by the odd cycles they contain, thereby producing a defect term that can be studied structurally and, in some families, asymptotically.  We have not found a prior use of this exact odd-cycle-indexed form for the Alon--Friedland problem.

The refinement should also be understood in this calibrated sense.  Since \(\delta_{\mathrm{odd}}(G)\) is itself expressed through perfect-matching counts of induced subgraphs, it is not presented as a general-purpose efficient way to compute \(\perfmat(G)\).  Its value is structural: it identifies the part of the adjacency permanent that cannot come from two perfect matchings, gives exact multiplicative information when enough defect terms can be evaluated, and explains why the Bregman--Minc target is very far from \(\perfmat(G)^2\) for dense nonbipartite graphs such as \(K_{2n}\).  The finite examples below are illustrative; the complete-graph calculation gives the main asymptotic use case.

The main consequences in this note are the structural criterion for when the defect is positive, the asymptotic saturation result for complete graphs, the classification of one-edge perturbations of the Alon--Friedland extremal graphs, and a sharp bounded-degree gap problem.  In particular, \(K_4\) gives a uniform obstruction \(c(\Delta)\le 1-(3/4)^{1/3}\) for every \(\Delta\ge 3\), while one-edge perturbations give the asymptotic obstruction of order \(1/\Delta\).  The two natural candidate extremal ratios cross between \(\Delta=9\) and \(\Delta=10\); this numerical crossover motivates, but does not prove, the sharp-gap problem formulated in Section~\ref{sec:defect}.

For \(F\subseteq E(G)\), let \(G-F\) denote the spanning subgraph obtained by deleting the edges in \(F\).  For \(S\subseteq V(G)\), let \(G-S\) denote the induced subgraph obtained by deleting the vertices in \(S\).  If \(J\subseteq E(G)\), let \(V(J)\) be the set of endpoints of the edges in \(J\).  A matching \(J\) \emph{saturates} a vertex \(v\) if \(v\in V(J)\).

We define
\[
        \bip(G)=\min\{|F|:G-F\text{ is bipartite}\},
\]
the \emph{edge-bipartization number} of \(G\).  Equivalently,
\[
        \bip(G)=|E(G)|-\maxcut(G),
\]
where \(\maxcut(G)\) is the maximum number of edges crossing a cut in \(G\), because every spanning bipartite subgraph is supported on a cut.  We also define
\[
        \oct(G)=\min\{|S|:G-S\text{ is bipartite}\},
\]
the \emph{odd-cycle transversal number} of \(G\).

The paper has four sections.  Section~\ref{sec:bip} contains auxiliary edge- and vertex-bipartization decompositions.  They are not needed for the proof of the odd-cycle expansion, but they explain how nonbipartite perfect-matching counts, including the subgraph counts appearing in the defect, may be reduced to sums of bipartite permanent terms when a small edge- or vertex-bipartization set is known.  Section~\ref{sec:odd} proves the exact odd-cycle expansion of \(\per(A(G))\) and records the structural consequences of the resulting defect identity.  Section~\ref{sec:defect} gives the defect-refined Alon--Friedland estimate, formulates the uniform-gap question, proves the near-extremal obstruction, and then gives examples, the complete-graph derangement identity, and the asymptotic consequences.

\section{Bipartization decompositions}\label{sec:bip}

This section is auxiliary to the odd-cycle expansion.  Its purpose is to record elementary decompositions that turn perfect-matching counts in nonbipartite graphs into sums of bipartite permanent terms after deleting a small edge or vertex set.  These formulas help place the subgraph counts \(\perfmat(G-V(\Ccal))\) appearing later in a computationally familiar permanent framework.  The first identity is a partition of the set of perfect matchings according to which deleted edges are used.

\begin{proposition}\label{prop:edge-bip}
Let \(G\) be a finite simple graph, let \(F\subseteq E(G)\), and put \(H=G-F\).  Then
\begin{equation}\label{eq:edge-bip}
        \perfmat(G)=
        \sum_{\substack{J\subseteq F\\ J\text{ a matching}}}
        \perfmat(H-V(J)).
\end{equation}
\end{proposition}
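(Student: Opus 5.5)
The plan is to partition \(\PM(G)\) according to which edges of \(F\) each perfect matching uses, and then to exhibit a bijection between each class and the perfect matchings of a smaller graph. For a perfect matching \(M\in\PM(G)\), put \(J_M=M\cap F\). Since \(M\) is a matching, \(J_M\) is a matching contained in \(F\). Hence
\[
        \PM(G)=\bigsqcup_{\substack{J\subseteq F\\ J\text{ a matching}}}\{M\in\PM(G): M\cap F=J\},
\]
and this union is disjoint. It then suffices to show that, for each matching \(J\subseteq F\), the class \(\{M: M\cap F=J\}\) is in bijection with \(\PM(H-V(J))\).

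The bijection will be \(M\mapsto M\setminus J\), with inverse \(M'\mapsto M'\cup J\). I will check both directions.

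Take \(M\) with \(M\cap F=J\), and set \(M'=M\setminus J\).
\begin{itemize}
\item Every edge of \(M'\) lies outside \(F\), so it lies in \(E(H)\).
\item No edge of \(M'\) meets \(V(J)\), because \(M\) is a matching containing \(J\). So \(M'\) lies in the induced subgraph \(H-V(J)\).
\item Every vertex outside \(V(J)\) is covered by some edge of \(M\), and that edge is not in \(J\), so it is covered by \(M'\).
\end{itemize}
Thus \(M'\in\PM(H-V(J))\).

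Conversely, take \(M'\in\PM(H-V(J))\). The edges of \(M'\) avoid \(V(J)\), so \(M'\cup J\) is a matching, and it covers \(V(G)=V(J)\sqcup (V(G)\setminus V(J))\). It is therefore a perfect matching of \(G\). Its intersection with \(F\) is exactly \(J\), because \(M'\subseteq E(H)=E(G)\setminus F\).

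The two maps are clearly mutually inverse. Summing the class sizes over all matchings \(J\subseteq F\) gives \eqref{eq:edge-bip}.

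There is no real obstacle; the only care needed concerns conventions. When \(V(J)=V(G)\), the graph \(H-V(J)\) is empty and contributes \(1\), by the stated convention that the empty graph has exactly one perfect matching. When \(J=\emptyset\), the term is \(\perfmat(H)\). Note also that \(H-V(J)\) is taken as an induced subgraph of \(H\), which is consistent with the earlier definition of vertex deletion.
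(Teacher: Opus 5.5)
Your proof is correct and follows the same route as the paper: you split each perfect matching \(M\) into \(J=M\cap F\) and \(M\setminus J\in\PM(H-V(J))\), and you recover \(M\) as the union of the two parts. Your version spells out the verification steps, and the empty-graph convention, in more detail than the paper does.
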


\begin{proof}
Let \(M\in\PM(G)\).  Set
\[
        J=M\cap F,
        \qquad
        N=M\setminus J.
\]
Then \(J\) is a matching contained in \(F\), and \(N\) is a perfect matching of \(H-V(J)\).  Conversely, if \(J\subseteq F\) is a matching and \(N\in\PM(H-V(J))\), then \(J\cup N\) is a perfect matching of \(G\).  These constructions are inverse to one another.  Counting the resulting pairs \((J,N)\) proves \eqref{eq:edge-bip}.
\end{proof}

\begin{corollary}\label{cor:edge-terms}
Let \(F\subseteq E(G)\) be such that \(H=G-F\) is bipartite.  Then \(\perfmat(G)\) is a sum of the numbers \(\perfmat(H-V(J))\), where \(J\) runs over matchings contained in \(F\).  Each nonzero summand is the permanent of a bipartite adjacency matrix.  In particular, if \(|F|=\bip(G)\), then \(\perfmat(G)\) is expressed as a sum of at most \(2^{\bip(G)}\) bipartite permanent terms.
\end{corollary}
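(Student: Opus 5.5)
The plan is to apply Proposition~\ref{prop:edge-bip} directly to the given set \(F\), with \(H=G-F\) bipartite, and then check three things: each summand is a permanent, the summands are nonnegative integers, and their number is bounded. By \eqref{eq:edge-bip},
\[
        \perfmat(G)=\sum_{\substack{J\subseteq F\\ J\text{ a matching}}}\perfmat(H-V(J)),
\]
so \(\perfmat(G)\) is a sum of the numbers \(\perfmat(H-V(J))\) with \(J\) ranging over matchings in \(F\). This gives the first assertion.

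For the second assertion, fix a matching \(J\subseteq F\). Deleting vertices preserves bipartiteness, so \(H-V(J)\) is bipartite. Restrict the bipartition \(X\sqcup Y\) of \(H\) to obtain \(X'\sqcup Y'\). Suppose the summand \(\perfmat(H-V(J))\) is nonzero. Then \(H-V(J)\) has a perfect matching. In a bipartite graph every matching edge joins \(X'\) to \(Y'\), so a perfect matching forces \(|X'|=|Y'|\); that is, \(H-V(J)\) is balanced. The introduction recalls that for a balanced bipartite graph \(\perfmat\) equals \(\per(B)\), where \(B\) is the bipartite adjacency matrix. Hence each nonzero summand is a bipartite permanent.

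One degenerate case needs a remark. If \(V(J)\) is all of \(V(H)\), the summand is the empty graph, which has one perfect matching by convention. This matches the permanent of the \(0\times 0\) matrix, which is \(1\), so the statement still holds.

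Finally, the number of summands is at most the number of subsets of \(F\), namely \(2^{|F|}\). When \(|F|=\bip(G)\) this bound is \(2^{\bip(G)}\). Such an \(F\) exists by the definition of \(\bip(G)\) as a minimum.

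There is no real obstacle here. The only point needing care is that zero summands may come from unbalanced bipartite graphs. Those terms are not permanents of square matrices, which is why the statement only claims this for the nonzero summands.
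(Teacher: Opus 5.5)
Your proof is correct and follows the paper's argument: you apply Proposition~\ref{prop:edge-bip}, observe that each \(H-V(J)\) is bipartite and must be balanced whenever it has a perfect matching, so its count is a bipartite permanent, and bound the number of summands by \(2^{|F|}\). Your extra remark on the empty-graph term and the \(0\times 0\) permanent is a harmless refinement that the paper leaves implicit.
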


\begin{proof}
The identity \eqref{eq:edge-bip} gives
\[
        \perfmat(G)=
        \sum_{\substack{J\subseteq F\\ J\text{ a matching}}}
        \perfmat(H-V(J)).
\]
Since \(H\) is bipartite and induced subgraphs of bipartite graphs are bipartite, each graph \(H-V(J)\) is bipartite.  If \(H-V(J)\) is balanced, then its perfect matchings are counted by the permanent of its bipartite adjacency matrix; if it is unbalanced, then it has no perfect matching and the corresponding summand is zero.  Since the number of subsets of \(F\) is \(2^{|F|}\), the final assertion follows.
\end{proof}

For a graph \(Q\), define \(U(Q)\) by the product in \eqref{eq:AF} if \(Q\) has no isolated vertices, set \(U(\varnothing)=1\), and set \(U(Q)=0\) if \(Q\neq\varnothing\) has an isolated vertex.

\begin{corollary}\label{cor:edge-bound}
If \(F\subseteq E(G)\) and \(H=G-F\) is bipartite, then
\[
        \perfmat(G)
        \le
        \sum_{\substack{J\subseteq F\\ J\text{ a matching}}}
        U(H-V(J)).
\]
Consequently,
\[
        \perfmat(G)
        \le
        \min_{\substack{F\subseteq E(G)\\ G-F\text{ bipartite}}}
        \sum_{\substack{J\subseteq F\\ J\text{ a matching}}}
        U((G-F)-V(J)).
\]
\end{corollary}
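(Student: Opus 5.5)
The plan is to start from the exact partition in Proposition~\ref{prop:edge-bip} and bound each summand separately. Fix \(F\subseteq E(G)\) with \(H=G-F\) bipartite. By \eqref{eq:edge-bip},
\[
        \perfmat(G)=\sum_{\substack{J\subseteq F\\ J\text{ a matching}}}\perfmat(H-V(J)),
\]
so it suffices to prove the termwise inequality \(\perfmat(Q)\le U(Q)\) for every graph of the form \(Q=H-V(J)\). Summing these inequalities then gives the first claim. Since this holds for every admissible \(F\), taking the minimum over all \(F\) with \(G-F\) bipartite gives the second claim; the minimum exists because there are only finitely many subsets of \(E(G)\).

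For the termwise inequality, each \(Q=H-V(J)\) is an induced subgraph of the bipartite graph \(H\), so it is bipartite. I would split into three cases, following the conventions defining \(U(Q)\).

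\textbf{Case 1: \(Q=\varnothing\).} By convention \(\perfmat(\varnothing)=1=U(\varnothing)\).

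\textbf{Case 2: \(Q\neq\varnothing\) has an isolated vertex.} That vertex cannot be covered, so \(\perfmat(Q)=0=U(Q)\).

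\textbf{Case 3: \(Q\) has no isolated vertices.} The Alon--Friedland bound \eqref{eq:AF} applies directly, giving \(\perfmat(Q)\le U(Q)\). Alternatively, if \(Q\) is bipartite and unbalanced, then \(\perfmat(Q)=0\le U(Q)\). If \(Q\) is balanced with bipartite adjacency matrix \(B\), then \(\perfmat(Q)=\per(B)=\sqrt{\per(B)\per(B^{T})}\). Applying Bregman--Minc \eqref{eq:bregman} to the rows and to the columns yields \(\prod_v (d_Q(v)!)^{1/(2d_Q(v))}=U(Q)\), which rederives \eqref{eq:AF} in the bipartite case.

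There is no real obstacle here. The only point requiring care is that the boundary conventions for \(U\) (the empty graph, and graphs with isolated vertices) are chosen so that the termwise inequality holds in every case. Beyond that, the statement follows immediately from Proposition~\ref{prop:edge-bip} together with \eqref{eq:AF}.
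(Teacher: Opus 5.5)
Your proposal is correct and follows the paper's proof. Both arguments use Proposition~\ref{prop:edge-bip} to reduce to the termwise bound \(\perfmat(H-V(J))\le U(H-V(J))\), dismiss the unbalanced and isolated-vertex cases trivially, and then minimize over \(F\). The only difference is that the paper handles the balanced case by applying \eqref{eq:bregman} to \(B\) and \(B^{\mathsf T}\), which is exactly your ``alternative'' argument, rather than citing \eqref{eq:AF} as a black box.
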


\begin{proof}
By Proposition~\ref{prop:edge-bip}, it is enough to estimate each \(\perfmat(H-V(J))\).  If \(H-V(J)\) is unbalanced, then \(\perfmat(H-V(J))=0\le U(H-V(J))\).  If \(H-V(J)\neq\varnothing\) has an isolated vertex, then again \(\perfmat(H-V(J))=0=U(H-V(J))\).

It remains to consider the case in which \(H-V(J)\) is a balanced bipartite graph without isolated vertices.  Let its bipartition be \(X\sqcup Y\), and let \(B\) be the corresponding square bipartite adjacency matrix.  Applying \eqref{eq:bregman} to \(B\) and to \(B^{\mathsf T}\) gives
\[
        \per(B)^2
        =\per(B)\per(B^{\mathsf T})
        \le
        \prod_{x\in X}(d(x)!)^{1/d(x)}
        \prod_{y\in Y}(d(y)!)^{1/d(y)}
        =U(H-V(J))^2.
\]
Since \(\perfmat(H-V(J))=\per(B)\), it follows that
\[
        \perfmat(H-V(J))\le U(H-V(J)).
\]
Summing over \(J\) yields the first inequality, and the second follows by minimizing over all edge-bipartization sets.
\end{proof}

The vertex version uses an odd-cycle transversal.

\begin{definition}
Let \(S\subseteq V(G)\) be an odd-cycle transversal.  We write \(\Jcal_S\) for the set of all matchings \(J\) in \(G\) such that \(S\subseteq V(J)\) and every edge of \(J\) is incident with \(S\).  Equivalently, \(J\in\Jcal_S\) if and only if \(J\) saturates every vertex of \(S\) and no edge of \(J\) lies entirely in \(V(G)\setminus S\).
\end{definition}

\begin{proposition}[Vertex-bipartization identity]\label{prop:vertex-bip}
Let \(G\) be a finite simple graph, and let \(S\subseteq V(G)\) be an odd-cycle transversal.  Then
\begin{equation}\label{eq:vertex-bip}
        \perfmat(G)=
        \sum_{J\in\Jcal_S}\perfmat(G-V(J)).
\end{equation}
Moreover, for every \(J\in\Jcal_S\), the graph \(G-V(J)\) is bipartite.
\end{proposition}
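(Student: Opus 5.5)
The identity will come from a bijection, modeled on the proof of Proposition~\ref{prop:edge-bip}, between \(\PM(G)\) and pairs \((J,N)\) with \(J\in\Jcal_S\) and \(N\in\PM(G-V(J))\). Given \(M\in\PM(G)\), put
\[
        J=\{e\in M: e\cap S\neq\varnothing\},\qquad N=M\setminus J.
\]
First, \(J\) is a matching because it is a subset of \(M\). Every edge of \(J\) meets \(S\) by construction. Every vertex of \(S\) is covered by some edge of \(M\), and that edge meets \(S\), so it lies in \(J\); hence \(S\subseteq V(J)\) and \(J\in\Jcal_S\). The edges of \(N\) avoid \(V(J)\) because \(M\) is a matching. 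Each vertex outside \(V(J)\) is covered by an edge of \(M\) that is not in \(J\), so \(N\) is a perfect matching of the induced subgraph \(G-V(J)\).

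Conversely, take \(J\in\Jcal_S\) and \(N\in\PM(G-V(J))\). Then \(J\cup N\) is a perfect matching of \(G\), since the two vertex sets \(V(J)\) and \(V(G)\setminus V(J)\) partition \(V(G)\). The one point needing care is that applying the forward map to \(J\cup N\) returns exactly \(J\). Every edge of \(J\) meets \(S\). No edge of \(N\) meets \(S\), because \(S\subseteq V(J)\) and \(N\) lives on \(V(G)\setminus V(J)\). So the edges of \(J\cup N\) meeting \(S\) are precisely those of \(J\), and the forward map recovers \(J\) and \(N\). The two maps are therefore mutually inverse, and counting the pairs \((J,N)\) gives \eqref{eq:vertex-bip}.

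For the final assertion, since \(S\subseteq V(J)\), the graph \(G-V(J)\) is an induced subgraph of \(G-S\). The graph \(G-S\) is bipartite because \(S\) is an odd-cycle transversal, and induced subgraphs of bipartite graphs are bipartite.

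I expect no real obstacle. The only subtle step is the converse check above, namely that the condition \(S\subseteq V(J)\) is exactly what stops edges of \(N\) from touching \(S\). Without it, the decomposition of a given \(M\) would not be unique.
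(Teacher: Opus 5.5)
Your proposal is correct and follows essentially the same route as the paper. Both use the bijection \(M\mapsto (J,N)\), where \(J\) consists of the edges of \(M\) meeting \(S\), with inverse \((J,N)\mapsto J\cup N\), and both get bipartiteness because \(G-V(J)\) is an induced subgraph of \(G-S\); your explicit check that no edge of \(N\) meets \(S\), so the forward map recovers \(J\), just spells out what the paper compresses into ``these constructions are inverse to each other.''
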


\begin{proof}
Let \(M\in\PM(G)\), and define
\[
        J=\{e\in M:e\cap S\neq\varnothing\},
        \qquad
        N=M\setminus J.
\]
Since \(M\) is a perfect matching, every vertex of \(S\) is incident with exactly one edge of \(J\).  Hence \(S\subseteq V(J)\).  Every edge of \(J\) has at least one endpoint in \(S\), so \(J\in\Jcal_S\).  The remaining matching \(N\) covers exactly the vertices outside \(V(J)\), hence \(N\in\PM(G-V(J))\).

Conversely, let \(J\in\Jcal_S\) and \(N\in\PM(G-V(J))\).  Since \(J\) is a matching and \(N\) is a matching on the complementary vertex set \(V(G)\setminus V(J)\), the union \(J\cup N\) is a matching.  Because \(S\subseteq V(J)\) and \(N\) covers all vertices of \(G-V(J)\), the union \(J\cup N\) covers every vertex of \(G\).  Thus \(J\cup N\in\PM(G)\).

These constructions are inverse to each other, and counting the resulting pairs \((J,N)\) proves \eqref{eq:vertex-bip}.  Finally, since \(S\subseteq V(J)\), the graph \(G-V(J)\) is an induced subgraph of \(G-S\).  As \(G-S\) is bipartite, so is \(G-V(J)\).
\end{proof}

\begin{corollary}\label{cor:vertex-count}
For every odd-cycle transversal \(S\),
\[
        |\Jcal_S|\le \prod_{s\in S} d_G(s)\le \Delta(G)^{|S|}.
\]
Consequently, if \(|S|=\oct(G)\), then \(\perfmat(G)\) is expressed as a sum of at most \(\Delta(G)^{\oct(G)}\) bipartite permanent terms.
\end{corollary}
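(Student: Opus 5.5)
The plan is to define an injection from \(\Jcal_S\) into the product \(\prod_{s\in S}N_G(s)\), which gives the first inequality. The second inequality is immediate from \(d_G(s)\le\Delta(G)\) for each \(s\in S\). The ``consequently'' part then follows by combining this count with Proposition~\ref{prop:vertex-bip} and the bipartite reduction already used in Corollary~\ref{cor:edge-terms}.

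For the injection, fix \(J\in\Jcal_S\). Each \(s\in S\) is saturated by exactly one edge of \(J\), since \(S\subseteq V(J)\) and \(J\) is a matching. Call that edge \(ss'\) and define \(\phi(J)=(s')_{s\in S}\), which lies in \(\prod_{s\in S}N_G(s)\). To show \(\phi\) is injective, I will recover \(J\) from \(\phi(J)\). Every edge of \(J\) is incident with \(S\), so every edge of \(J\) has the form \(ss'\) for some \(s\in S\). Hence \(J=\{s s' : s\in S\}\) as a set, and this set is determined by the tuple. If both endpoints of an edge lie in \(S\), that edge appears twice in the description, but this causes no problem because we are taking a set. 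So \(\phi\) is injective and \(|\Jcal_S|\le\prod_{s\in S}d_G(s)\). The bound is not tight in general, since many tuples are not images: two vertices of \(S\) may choose the same partner, or the two choices for an edge inside \(S\) may be inconsistent. None of this matters for an upper bound.

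For the final assertion, take \(|S|=\oct(G)\). By Proposition~\ref{prop:vertex-bip},
\[
\perfmat(G)=\sum_{J\in\Jcal_S}\perfmat(G-V(J)),
\]
and each \(G-V(J)\) is bipartite. If \(G-V(J)\) is balanced, its summand is the permanent of its bipartite adjacency matrix. If it is unbalanced, the summand is zero, which is the permanent of a non-square matrix under the usual convention, or we can simply drop it. So the sum has at most \(|\Jcal_S|\le\Delta(G)^{\oct(G)}\) bipartite permanent terms.

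There is no real obstacle here. The only point needing care is the well-definedness and injectivity of \(\phi\) when an edge of \(J\) has both endpoints in \(S\), and that is handled by the set description above.
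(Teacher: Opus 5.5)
Your proof is correct and is the paper's argument: the partner map $J\mapsto(\text{partner of }s\text{ in }J)_{s\in S}$ into $\prod_{s\in S}N_G(s)$ is injective because every edge of $J$ meets $S$, and the final claim comes from Proposition~\ref{prop:vertex-bip}. One small correction to an aside: under the usual rectangular convention, the permanent of an $m\times n$ matrix with $m<n$ counts matchings saturating the smaller side and need not be zero, so for unbalanced $G-V(J)$ you should simply drop the zero summand, as you also propose.
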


\begin{proof}
Each matching \(J\in\Jcal_S\) assigns to each \(s\in S\) its unique partner in \(J\).  Hence the map
\[
        J\longmapsto (\text{partner of }s\text{ in }J)_{s\in S}
\]
is injective into \(\prod_{s\in S} N_G(s)\).  Indeed, every edge of \(J\) is incident with \(S\), so the list of partners reconstructs the whole edge set \(J\), including the case in which an edge of \(J\) has both endpoints in \(S\).  Therefore
\[
        |\Jcal_S|\le \prod_{s\in S} |N_G(s)|=\prod_{s\in S} d_G(s)\le \Delta(G)^{|S|}.
\]
The final assertion follows from Proposition~\ref{prop:vertex-bip}.
\end{proof}

\begin{example}[Internal perturbations of \(K_{n,n}\)]\label{ex:internal}
Let \(X\) and \(Y\) be disjoint sets with \(|X|=|Y|=n\), and let \(K_{X,Y}\) denote the complete bipartite graph with bipartition \(X\sqcup Y\).  Let \(L_X\) be a graph on \(X\), let \(L_Y\) be a graph on \(Y\), and set
\[
        G=K_{X,Y}\cup L_X\cup L_Y.
\]
If \(m_j(L)\) denotes the number of \(j\)-edge matchings in \(L\), then
\[
        \perfmat(G)=
        \sum_{j=0}^{\lfloor n/2\rfloor}
        m_j(L_X)m_j(L_Y)(n-2j)!.
\]
Indeed, apply Proposition~\ref{prop:edge-bip} with \(F=E(L_X)\cup E(L_Y)\) and \(H=K_{X,Y}\).  A matching \(J\subseteq F\) decomposes uniquely as \(J_X\sqcup J_Y\).  The graph \(H-V(J)\) has a perfect matching exactly when \(|J_X|=|J_Y|=j\), in which case it is \(K_{n-2j,n-2j}\), which has \((n-2j)!\) perfect matchings.
\end{example}

\section{The odd-cycle expansion of the adjacency permanent}\label{sec:odd}

Fix a labelling \(V(G)=\{1,\dots,N\}\).  A \emph{directed cycle cover} of \(G\) is a collection of vertex-disjoint directed cycles covering all vertices, where each directed edge is an orientation of an edge of \(G\).  Equivalently, it is a permutation \(\sigma\in S_N\) such that \(i\sigma(i)\in E(G)\) for every \(i\).  Hence \(\per(A(G))\) counts the directed cycle covers of \(G\), the standard permanent interpretation used in the proof of \eqref{eq:AF}.

If \(D\) is a directed even cycle
\[
        v_0\to v_1\to \cdots \to v_{2m-1}\to v_0,
\]
we write the cyclic order so that \(v_0\) is the smallest labelled vertex on the cycle.  Its \emph{canonical split} is the ordered pair of matchings
\[
        M_1(D)=\{v_0v_1,v_2v_3,\dots,v_{2m-2}v_{2m-1}\},
\]
\[
        M_2(D)=\{v_1v_2,v_3v_4,\dots,v_{2m-1}v_0\}.
\]
A directed \(2\)-cycle is treated as a doubled edge; in that case both split matchings contain the same edge.

\begin{definition}
An \emph{odd-cycle family} in \(G\) is a collection \(\Ccal\) of pairwise vertex-disjoint odd cycles in \(G\).  The empty family is allowed, and we write
\[
        V(\Ccal)=\bigcup_{C\in\Ccal}V(C).
\]
\end{definition}

\begin{lemma}\label{lem:split}
Let \(H\) be a labelled finite simple graph.  Ordered pairs \((M_1,M_2)\in \PM(H)\times \PM(H)\) are in bijection with directed cycle covers of \(H\) all of whose directed cycles have even length.
\end{lemma}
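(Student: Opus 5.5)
I will construct explicit maps in both directions and check that they are mutually inverse. For the forward map, take \((M_1,M_2)\in\PM(H)\times\PM(H)\) and form the multigraph union \(M_1\uplus M_2\). Every vertex has degree exactly \(2\) there, one edge from each matching. So each component is either a doubled edge \(e\in M_1\cap M_2\) or a cycle whose edges alternate between \(M_1\) and \(M_2\); such a cycle has even length at least \(4\). I orient each component as follows. A doubled edge \(uv\) becomes the directed \(2\)-cycle \(u\to v\to u\). For an alternating cycle, let \(v_0\) be its smallest labelled vertex. Let \(v_1\) be the partner of \(v_0\) in \(M_1\), and traverse \(v_0\to v_1\to v_2\to\cdots\), where \(v_1v_2\in M_2\), \(v_2v_3\in M_1\), and so on, until the cycle closes at \(v_0\). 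Since the components are vertex-disjoint and cover \(V(H)\), this gives a directed cycle cover of \(H\) with all cycles even. Equivalently, it is a permutation \(\sigma\) with \(i\sigma(i)\in E(H)\).

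For the backward map, take a directed cycle cover all of whose cycles are even. For each cycle \(D\), apply the canonical split \((M_1(D),M_2(D))\) defined above, with a \(2\)-cycle contributing its edge to both sides. Then set \(M_j=\bigcup_D M_j(D)\). Each \(M_j(D)\) is a perfect matching of \(V(D)\), because alternate edges of an even cycle partition its vertices into pairs. The cycles are disjoint and cover \(V(H)\), so \(M_1,M_2\in\PM(H)\).

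It remains to check the two compositions. The canonical split of the directed cycle produced in the forward direction returns exactly \(M_1\cap E(C)\) and \(M_2\cap E(C)\). This holds because both constructions start at the minimum vertex \(v_0\) and assign the first edge \(v_0v_1\) to \(M_1\); the doubled-edge case is immediate. In the other direction, superposing \(M_1(D)\) and \(M_2(D)\) recovers the undirected cycle \(D\). Orienting it by the rule \(v_0\to\) (the \(M_1\)-partner of \(v_0\)) recovers the original direction of \(D\), since in the canonical split \(v_0v_1\in M_1(D)\) with \(v_1=\sigma(v_0)\).

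The only delicate point is the orientation convention, since an undirected even cycle of length at least \(4\) has two orientations. An ordered pair \((M_1,M_2)\) must pick out exactly one of them, and swapping \(M_1\) and \(M_2\) must pick out the other. The "smallest label, then follow \(M_1\)" rule does exactly this. I will also state explicitly that \(2\)-cycles correspond to common edges. These have a single orientation up to cyclic rotation, which is consistent with counting each common edge once per ordered pair.
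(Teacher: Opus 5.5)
Your proposal is correct and follows the paper's proof: superpose \(M_1\) and \(M_2\) into a \(2\)-regular multigraph, orient each alternating cycle so that the edge leaving its smallest-labelled vertex lies in \(M_1\), treat common edges as directed \(2\)-cycles, and invert by the canonical split. Your explicit check of both compositions fills in the detail the paper summarizes as ``the two constructions are inverse to one another.''
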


\begin{proof}
Given \((M_1,M_2)\), form the \(2\)-regular multigraph \(M_1\cup M_2\), counting an edge twice if it lies in both matchings.  Its connected components are even cycles, with a common edge regarded as a directed \(2\)-cycle.  On each component of length at least \(4\), the edges alternate between \(M_1\) and \(M_2\).  Choose the unique orientation for which, at the smallest labelled vertex \(v_0\) of the component, the first directed edge belongs to \(M_1\).  This produces a directed cycle cover with only even cycles.

Conversely, given a directed cycle cover all of whose cycles are even, split each directed even cycle by the canonical rule above.  The first split edges over all cycles form a perfect matching \(M_1\), and the second split edges form a perfect matching \(M_2\).  The two constructions are inverse to one another.
\end{proof}

The following identity is a bookkeeping form of the standard cycle-cover expansion of the permanent.  Its empty-family term is the usual two-matching superposition contribution; the remaining terms are grouped according to the odd cycles in the directed cycle cover.  The later defect terminology refers to this grouping, not to a new interpretation of the permanent itself.

\begin{theorem}[Exact odd-cycle expansion]\label{thm:odd-expansion}
Let \(G\) be a finite simple graph, and let \(A=A(G)\) be its adjacency matrix.  Then
\begin{equation}\label{eq:odd-expansion}
        \per(A)=
        \sum_{\Ccal}
        2^{|\Ccal|}\perfmat(G-V(\Ccal))^2,
\end{equation}
where the sum ranges over all odd-cycle families \(\Ccal\) in \(G\), including the empty family.
\end{theorem}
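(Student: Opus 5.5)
We will prove \eqref{eq:odd-expansion} by partitioning the set of directed cycle covers of \(G\), which \(\per(A)\) counts, according to the collection of odd cycles they contain. Given a directed cycle cover \(\sigma\), let \(\Ccal(\sigma)\) be the family of underlying undirected cycles of the odd directed cycles of \(\sigma\). Odd directed cycles in a simple graph have length at least \(3\), so each is an orientation of a genuine odd cycle of \(G\). They are pairwise vertex-disjoint, so \(\Ccal(\sigma)\) is an odd-cycle family in the sense of the definition above. Hence
\[
        \per(A)=\sum_{\Ccal}\#\{\sigma:\Ccal(\sigma)=\Ccal\},
\]
and it suffices to show that for each fixed odd-cycle family \(\Ccal\) the inner count equals \(2^{|\Ccal|}\perfmat(G-V(\Ccal))^2\).

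Fix \(\Ccal\). A cover \(\sigma\) with \(\Ccal(\sigma)=\Ccal\) is determined by two pieces of data.

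\emph{Orientations.} It chooses an orientation of each \(C\in\Ccal\). Each such cycle has length at least \(3\), so it has exactly \(2\) orientations, giving \(2^{|\Ccal|}\) choices.

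\emph{Restriction.} Its restriction to \(V(G)\setminus V(\Ccal)\) is a directed cycle cover of the induced subgraph \(H=G-V(\Ccal)\) all of whose cycles are even. Here directed \(2\)-cycles are allowed and correspond to doubled edges.

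Conversely, any choice of orientations together with any all-even directed cycle cover of \(H\) assembles into a directed cycle cover \(\sigma\) of \(G\). Its odd cycles are exactly the cycles of \(\Ccal\), so \(\Ccal(\sigma)=\Ccal\). The edges used lie in \(G\) because \(H\) is an induced subgraph. This gives a bijection, so the count factors as \(2^{|\Ccal|}\) times the number of all-even directed cycle covers of \(H\).

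We apply Lemma~\ref{lem:split} to \(H\), with the labelling inherited from \(G\). The all-even directed cycle covers of \(H\) are in bijection with \(\PM(H)\times\PM(H)\), so their number is \(\perfmat(H)^2\). Two boundary cases need a check.

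If \(V(\Ccal)=V(G)\), then \(H\) is empty. By the convention \(\perfmat(\varnothing)=1\), the empty cover contributes exactly \(1\), which matches the lemma.

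If \(H\) has no perfect matching, both sides are \(0\).

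The step needing the most care is the well-definedness of the partition. Directed \(2\)-cycles must be classified as even, never odd; this uses that \(G\) is simple, so there are no loops (\(1\)-cycles). Distinct odd directed cycles of \(\sigma\) must give distinct members of \(\Ccal(\sigma)\), which holds because they are vertex-disjoint. Finally, the factor \(2\) per odd cycle must not overcount: reversing an odd cycle of length at least \(3\) gives a different permutation. With these points verified, summing over \(\Ccal\) yields \eqref{eq:odd-expansion}. The empty family contributes the term \(\perfmat(G)^2\).
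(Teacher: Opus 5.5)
Your proof is correct and takes essentially the same approach as the paper. Both arguments biject directed cycle covers with triples (odd-cycle family \(\Ccal\), orientations of its cycles, ordered pair of perfect matchings of \(G-V(\Ccal)\)) and invoke Lemma~\ref{lem:split} for the even part. Your explicit checks of the empty-graph case and of why simplicity rules out odd cycles of length \(1\) are a slightly more careful writing of the same argument.
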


\begin{proof}
We construct a bijection between directed cycle covers of \(G\) and the objects counted by the right-hand side of \eqref{eq:odd-expansion}.

Start with an odd-cycle family \(\Ccal\), choose one of the two cyclic orientations of each odd cycle in \(\Ccal\), and choose an ordered pair
\[
        (M_1,M_2)\in \PM(G-V(\Ccal))\times \PM(G-V(\Ccal)).
\]
By Lemma~\ref{lem:split}, the pair \((M_1,M_2)\) determines a directed cycle cover of \(G-V(\Ccal)\) consisting only of even cycles.  Together with the chosen orientations of the odd cycles in \(\Ccal\), this gives a directed cycle cover of \(G\).

Conversely, let \(\mathcal{D}\) be a directed cycle cover of \(G\).  Let \(\Ccal\) be the family of underlying undirected odd cycles occurring in \(\mathcal{D}\).  Their orientations are already part of \(\mathcal{D}\).  After deleting \(V(\Ccal)\), the remaining directed cycles all have even length.  By Lemma~\ref{lem:split}, this remaining even directed cycle cover determines a unique ordered pair
\[
        (M_1,M_2)\in \PM(G-V(\Ccal))\times \PM(G-V(\Ccal)).
\]
The two constructions are inverse.  For a fixed \(\Ccal\), there are \(2^{|\Ccal|}\) choices of orientations of its odd cycles and \(\perfmat(G-V(\Ccal))^2\) ordered pairs of perfect matchings on the remaining graph.  Summing over \(\Ccal\) proves \eqref{eq:odd-expansion}.
\end{proof}

\begin{definition}\label{def:defect}
The \emph{odd-cycle defect} of \(G\) is
\[
        \delta_{\mathrm{odd}}(G)=
        \sum_{\substack{\Ccal\neq\varnothing\\
        \Ccal\text{ an odd-cycle family}}}
        2^{|\Ccal|}\perfmat(G-V(\Ccal))^2.
\]
Thus Theorem~\ref{thm:odd-expansion} yields the exact identity
\begin{equation}\label{eq:per-defect}
        \per(A(G))=\perfmat(G)^2+\delta_{\mathrm{odd}}(G).
\end{equation}
\end{definition}

\begin{remark}[Relation with the standard even-cycle expansion]\label{rem:standard-even-expansion}
The case with no odd cycles is the familiar superposition principle for perfect matchings: an ordered pair of perfect matchings decomposes into doubled edges and even alternating cycles, and each even cycle can be split in two ways.  This is the even-cycle part of the cycle-cover expansion used in the Alon--Friedland argument \cite{AF}; see also \cite[Sec.~5.3]{Galvin}.  Theorem~\ref{thm:odd-expansion} adds no hidden weights beyond this classical cycle-cover count; its role is to separate the odd-cycle terms explicitly.
\end{remark}

\begin{remark}\label{rem:parity}
If \(|V(G)|\) is even, then a term indexed by \(\Ccal\) can be nonzero only if \(|\Ccal|\) is even.  Indeed, since every cycle in \(\Ccal\) has odd length, the parity of \(|V(\Ccal)|\) equals the parity of \(|\Ccal|\).  If \(|\Ccal|\) is odd and \(|V(G)|\) is even, then \(G-V(\Ccal)\) has odd order and therefore no perfect matching.  This parity observation will be used implicitly in the complete-graph calculation in Proposition~\ref{prop:derangement}.
\end{remark}

\begin{remark}\label{rem:bipartite-case}
If \(G\) is bipartite, then it has no nonempty odd-cycle family, so \(\delta_{\mathrm{odd}}(G)=0\).  In this case \eqref{eq:per-defect} reduces to the familiar identity
\[
        \per(A(G))=\perfmat(G)^2
\]
when the two bipartition classes are balanced.  For nonbipartite graphs, \(\delta_{\mathrm{odd}}(G)\) measures only the contribution of directed cycle covers containing odd cycles; it is not the same as edge-distance or vertex-distance from bipartiteness.
\end{remark}

\medskip\noindent\textbf{Structural consequences of the defect identity.}

It is useful to write
\[
        p(G)=\perfmat(G),\qquad
        \delta(G)=\delta_{\mathrm{odd}}(G),\qquad
        \Pi(G)=\per(A(G)).
\]
Then \eqref{eq:per-defect} says \(\Pi(G)=p(G)^2+\delta(G)\).

\begin{proposition}\label{prop:product-defect}
If \(G_1\) and \(G_2\) are vertex-disjoint finite simple graphs, then
\[
        \delta(G_1\sqcup G_2)
        =\delta(G_1)\delta(G_2)
        +p(G_1)^2\delta(G_2)
        +p(G_2)^2\delta(G_1).
\]
\end{proposition}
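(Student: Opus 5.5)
The plan is to derive the product formula from the multiplicativity of the two basic quantities \(p\) and \(\Pi\) under disjoint union, and then read off \(\delta\) from the defect identity \eqref{eq:per-defect}.

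\textbf{Step 1: \(p\) is multiplicative.} For vertex-disjoint \(G_1,G_2\), a perfect matching of \(G_1\sqcup G_2\) is exactly a pair consisting of a perfect matching of \(G_1\) and one of \(G_2\). Since no edge joins the two parts, this gives \(p(G_1\sqcup G_2)=p(G_1)p(G_2)\). The empty-graph convention is consistent with this.

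\textbf{Step 2: \(\Pi\) is multiplicative.} After ordering the vertices of \(G_1\) before those of \(G_2\), the adjacency matrix of \(G_1\sqcup G_2\) is block diagonal with blocks \(A(G_1)\) and \(A(G_2)\). The permanent of a block-diagonal matrix is the product of the permanents of the blocks, because any permutation mapping some index across blocks picks up a zero entry. Equivalently, a directed cycle cover of the union is a pair of cycle covers, one of each part. Hence \(\Pi(G_1\sqcup G_2)=\Pi(G_1)\Pi(G_2)\). Relabelling does not change the permanent of an adjacency matrix, since it conjugates by a permutation matrix.

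\textbf{Step 3: expand.} By \eqref{eq:per-defect} applied to \(G_1\sqcup G_2\), \(G_1\) and \(G_2\), we get
\[
\delta(G_1\sqcup G_2)=\Pi(G_1)\Pi(G_2)-p(G_1)^2p(G_2)^2=\bigl(p(G_1)^2+\delta(G_1)\bigr)\bigl(p(G_2)^2+\delta(G_2)\bigr)-p(G_1)^2p(G_2)^2.
\]
Expanding the product gives the stated formula.

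\textbf{Alternative check.} One can instead argue directly from Definition~\ref{def:defect}. Odd-cycle families in \(G_1\sqcup G_2\) are exactly the unions \(\Ccal_1\sqcup\Ccal_2\), since every cycle lies within one component part. For such a union, \(2^{|\Ccal|}\) and \(p(G-V(\Ccal))^2\) both factor over the two parts. Splitting the nonempty families into three cases (both parts nonempty, only \(\Ccal_2\) nonempty, only \(\Ccal_1\) nonempty) yields the three terms.

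There is no real obstacle. The only points needing care are the empty-graph conventions and the block-diagonal permanent factorization.
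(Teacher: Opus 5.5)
Your proof is correct and follows the paper's argument: you establish multiplicativity of \(p\) and \(\Pi\) via the block-diagonal adjacency matrix, substitute \(\Pi=p^2+\delta\) from \eqref{eq:per-defect}, and subtract \(p(G_1)^2p(G_2)^2\). Your alternative direct check, which splits each family as \(\Ccal=\Ccal_1\sqcup\Ccal_2\) into the three nonempty cases, is also valid.
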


\begin{proof}
A perfect matching of \(G_1\sqcup G_2\) is exactly a pair of perfect matchings, one in \(G_1\) and one in \(G_2\).  Hence
\[
        p(G_1\sqcup G_2)=p(G_1)p(G_2).
\]
Also, \(A(G_1\sqcup G_2)\) is block diagonal with diagonal blocks \(A(G_1)\) and \(A(G_2)\).  Therefore every permanent-contributing permutation preserves the two blocks, and
\[
        \Pi(G_1\sqcup G_2)=\Pi(G_1)\Pi(G_2).
\]
Using \(\Pi(G)=p(G)^2+\delta(G)\), we obtain
\[
        p(G_1)^2p(G_2)^2+\delta(G_1\sqcup G_2)
        =(p(G_1)^2+\delta(G_1))(p(G_2)^2+\delta(G_2)).
\]
Subtracting \(p(G_1)^2p(G_2)^2\) gives the formula.
\end{proof}

\begin{corollary}\label{cor:ratio-mult}
If \(\Pi(G)>0\), define
\[
        r(G)=\frac{p(G)^2}{\Pi(G)}.
\]
Then \(r\) is multiplicative over connected components.  In particular, if \(G=\bigsqcup_t G_t\) and \(\Pi(G)>0\), then
\[
        r(G)=\prod_t r(G_t).
\]
\end{corollary}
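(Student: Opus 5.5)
The plan is to derive the statement directly from the two multiplicativity facts already established in the proof of Proposition~\ref{prop:product-defect}, namely
\[
        p(G_1\sqcup G_2)=p(G_1)p(G_2),
        \qquad
        \Pi(G_1\sqcup G_2)=\Pi(G_1)\Pi(G_2),
\]
valid for any vertex-disjoint finite simple graphs \(G_1,G_2\). The first holds because a perfect matching of a disjoint union is a pair of perfect matchings of the parts. The second holds because the adjacency matrix is block diagonal, so only block-preserving permutations contribute. Neither uses the defect identity, though one could equally phrase everything through \eqref{eq:per-defect}.

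The first step is to dispose of the positivity hypothesis. If \(G=\bigsqcup_t G_t\) with \(\Pi(G)>0\), then \(\Pi(G)=\prod_t\Pi(G_t)\) by induction on the number of components. Since each \(\Pi(G_t)\) is a nonnegative integer (it counts cycle covers), positivity of the product forces \(\Pi(G_t)>0\) for every \(t\). Hence every \(r(G_t)\) is well defined, and \(r(G_1\sqcup G_2)\) is defined whenever both \(r(G_1)\) and \(r(G_2)\) are.

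Next comes the two-part identity:
\[
        r(G_1\sqcup G_2)=\frac{p(G_1)^2p(G_2)^2}{\Pi(G_1)\Pi(G_2)}=r(G_1)r(G_2).
\]
Finally, I would induct on the number of components, grouping \(G=G_1\sqcup(G_2\sqcup\cdots)\), to obtain \(r(G)=\prod_t r(G_t)\). Nothing here is a real obstacle. The only point needing care is the positivity bookkeeping in the first step, which ensures that no component ratio is undefined.
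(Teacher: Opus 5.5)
Your proposal is correct and follows the paper's own proof, which simply cites the multiplicativity of \(p\) and \(\Pi\) over disjoint unions from the proof of Proposition~\ref{prop:product-defect}. Your extra check that \(\Pi(G)>0\) forces \(\Pi(G_t)>0\) for every component, so that each \(r(G_t)\) is defined, is a worthwhile detail that the paper leaves implicit.
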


\begin{proof}
Both \(p\) and \(\Pi\) are multiplicative over disjoint unions, as in the proof of Proposition~\ref{prop:product-defect}.
\end{proof}

\begin{proposition}[Structural criterion for positive defect]\label{prop:positive-defect-criterion}
For every finite simple graph \(G\), one has \(\delta_{\mathrm{odd}}(G)>0\) if and only if \(G\) has a spanning subgraph in which every component is either a single edge or an odd cycle, with at least one odd-cycle component.
\end{proposition}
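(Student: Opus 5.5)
The plan is to read the criterion directly off Definition~\ref{def:defect}. Every summand \(2^{|\Ccal|}\perfmat(G-V(\Ccal))^2\) is a nonnegative integer. Hence \(\delta_{\mathrm{odd}}(G)>0\) if and only if some nonempty odd-cycle family \(\Ccal\) has \(\perfmat(G-V(\Ccal))\ge 1\). So the whole proof reduces to showing that the existence of such a pair \((\Ccal,M)\), with \(M\in\PM(G-V(\Ccal))\), is equivalent to the existence of the spanning subgraph described in the statement.

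For the forward direction, I would take \(\Ccal\neq\varnothing\) and \(M\in\PM(G-V(\Ccal))\), and form the spanning subgraph \(K\) with edge set \(M\cup\bigcup_{C\in\Ccal}E(C)\). The cycles in \(\Ccal\) are pairwise vertex-disjoint. \(M\) covers exactly \(V(G)\setminus V(\Ccal)\), and each vertex there is matched exactly once. Therefore the components of \(K\) are precisely the edges of \(M\) and the cycles of \(\Ccal\). Each cycle of \(\Ccal\) is odd by definition, and there is at least one of them since \(\Ccal\neq\varnothing\). Thus \(K\) is spanning and has the required form.

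For the converse, I would start from a spanning subgraph \(K\) whose components are single edges or odd cycles, with at least one odd cycle. Let \(\Ccal\) be the set of odd-cycle components and \(M\) the set of single-edge components. Then \(\Ccal\) is a nonempty odd-cycle family, and it consists of cycles of \(G\) because \(K\subseteq G\). Since \(K\) is spanning and its components partition \(V(G)\), \(M\) is a perfect matching of \(G-V(\Ccal)\). It follows that \(\delta_{\mathrm{odd}}(G)\ge 2^{|\Ccal|}\cdot 1>0\).

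There is no real obstacle here; the argument is bookkeeping. The one point requiring care is the degenerate case where \(G-V(\Ccal)\) is the empty graph, for example when the odd cycles alone cover \(V(G)\). The paper's convention that the empty graph has exactly one perfect matching handles this, and it matches a \(K\) with no single-edge components.

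Equivalently, one could argue through \eqref{eq:per-defect}. Then \(\delta_{\mathrm{odd}}(G)>0\) means that some directed cycle cover contains an odd directed cycle. Replacing each even directed cycle by one of its split matchings from Lemma~\ref{lem:split} yields \(K\). I would mention this reformulation but use the direct argument above.
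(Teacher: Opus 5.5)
Your proof is correct and follows the paper's argument essentially verbatim. Both reduce positivity of \(\delta_{\mathrm{odd}}(G)\) to the existence of a nonempty odd-cycle family \(\Ccal\) with \(\perfmat(G-V(\Ccal))\ge 1\), then pass between such a pair \((\Ccal,M)\) and the spanning subgraph by taking the union, or conversely by separating odd-cycle and single-edge components.
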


\begin{proof}
By Definition~\ref{def:defect}, every summand in \(\delta_{\mathrm{odd}}(G)\) is nonnegative.  Hence \(\delta_{\mathrm{odd}}(G)>0\) if and only if there is a nonempty odd-cycle family \(\Ccal\) such that
\[
        \perfmat(G-V(\Ccal))\ge 1.
\]
Suppose such a family \(\Ccal\) exists, and let \(M\) be a perfect matching of \(G-V(\Ccal)\).  Then
\[
        \left(\bigcup_{C\in\Ccal} C\right)\cup M
\]
is a spanning subgraph of \(G\) whose components are precisely the odd cycles in \(\Ccal\) and the single-edge components of \(M\).  Since \(\Ccal\neq\varnothing\), at least one component is an odd cycle.

Conversely, suppose \(G\) has a spanning subgraph whose components are single edges and odd cycles, with at least one odd-cycle component.  Let \(\Ccal\) be the family of its odd-cycle components.  The single-edge components then form a perfect matching of \(G-V(\Ccal)\).  Thus \(\perfmat(G-V(\Ccal))\ge 1\), so the corresponding summand in \(\delta_{\mathrm{odd}}(G)\) is positive.
\end{proof}

\begin{example}\label{ex:criterion}
The criterion explains why \(K_4\) has zero odd-cycle defect although it is nonbipartite.  Every odd cycle in \(K_4\) is a triangle, and deleting a triangle leaves one vertex, which has no perfect matching.  Hence no spanning subgraph of \(K_4\) can have components consisting only of single edges and at least one odd cycle.  By contrast, \(C_5\) itself is such a spanning subgraph, so \(\delta_{\mathrm{odd}}(C_5)>0\).
\end{example}

\begin{remark}[Fractional perfect matchings]\label{rem:fractional-perfect-matchings}
Proposition~\ref{prop:positive-defect-criterion} also has a standard polyhedral interpretation.  Let
\[
        P_f(G)=\{x\in \mathbb{R}_{\ge 0}^{E(G)}: x(\delta(v))=1 \text{ for every } v\in V(G)\}
\]
be the fractional perfect matching polytope.  Its extreme points are half-integral: their supports are disjoint unions of single edges, assigned value \(1\), and odd cycles, assigned value \(1/2\); see, for example, \cite[Ch.~7]{LovaszPlummer}.  Consequently \(\delta_{\mathrm{odd}}(G)>0\) if and only if \(P_f(G)\) has a nonintegral extreme point.  Thus the defect detects exactly the basic half-integral perfect-matchings that contain at least one odd cycle.
\end{remark}

\section{The odd-cycle defect and the Alon--Friedland bound}\label{sec:defect}

The odd-cycle expansion gives a direct refinement of the usual Alon--Friedland estimate whenever the odd-cycle defect is positive.  The refinement is mainly structural and asymptotic.  In general, evaluating \(\delta_{\mathrm{odd}}(G)\) may be as difficult as evaluating perfect-matching counts in many induced subgraphs.  Nevertheless, when a useful part of the defect is visible, or when the full defect can be evaluated in a symmetric family, the identity gives information about where the Alon--Friedland slack comes from.

\begin{theorem}[Odd-cycle defect bound]\label{thm:defect-bound}
Let \(G\) be a finite simple graph with no isolated vertices.  Then
\begin{equation}\label{eq:defect-bound}
        \perfmat(G)^2+\delta_{\mathrm{odd}}(G)\le U(G)^2.
\end{equation}
\end{theorem}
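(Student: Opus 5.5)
The plan is to combine the exact identity \eqref{eq:per-defect} from Theorem~\ref{thm:odd-expansion} with the Bregman--Minc inequality \eqref{eq:bregman} applied directly to the adjacency matrix \(A=A(G)\). By \eqref{eq:per-defect}, the left-hand side of \eqref{eq:defect-bound} equals \(\per(A(G))\). So it suffices to prove
\[
        \per(A(G))\le U(G)^2 .
\]

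The row sums of \(A(G)\) are the degrees \(d_G(v)\), and all of them are positive because \(G\) has no isolated vertices. Therefore \eqref{eq:bregman} applies and gives
\[
        \per(A(G))\le \prod_{v\in V(G)} (d_G(v)!)^{1/d_G(v)}.
\]
This is the point where the sharper constant must appear. The natural object is not \(A\) alone, since \(\prod_v (d_G(v)!)^{1/d_G(v)}\) is exactly \(U(G)^2\): the exponent \(1/(2d)\) in \(U(G)\), when squared, becomes \(1/d\). Indeed
\[
        U(G)^2=\prod_{v}(d_G(v)!)^{2/(2d_G(v))}=\prod_v (d_G(v)!)^{1/d_G(v)},
\]
so the Bregman--Minc bound for \(A(G)\) is precisely \(U(G)^2\), and no further work is needed.

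There is no real obstacle. The only points to check are the following.
\begin{itemize}
\item The exponent bookkeeping: \(U(G)\) uses exponent \(1/(2d)\), so its square matches Bregman's exponent \(1/d\) row by row.
\item The hypothesis that \(G\) has no isolated vertices, which makes every row sum positive, so the convention in \eqref{eq:bregman} for zero row sums never applies.
\item The fact that \eqref{eq:bregman} holds for any square \(0\)-\(1\) matrix, symmetric or not, so it applies to \(A(G)\) directly.
\end{itemize}
Combining these with \eqref{eq:per-defect} yields \eqref{eq:defect-bound}. Since \(\delta_{\mathrm{odd}}(G)\ge 0\), this also recovers \eqref{eq:AF}.
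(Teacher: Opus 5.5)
Your proof is correct and is exactly the paper's argument: rewrite the left-hand side as \(\per(A(G))\) via \eqref{eq:per-defect}, then apply Bregman--Minc to \(A(G)\), whose row sums are the degrees, giving \(\prod_v (d_G(v)!)^{1/d_G(v)}=U(G)^2\). The remark that ``the natural object is not \(A\) alone'' is a confusing aside and should be deleted, since Bregman--Minc applied to \(A\) alone is precisely the step used.
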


\begin{proof}
By \eqref{eq:per-defect}, the left-hand side equals \(\per(A(G))\).  The row sum of the row of \(A(G)\) indexed by \(v\) is \(d_G(v)\).  Applying \eqref{eq:bregman} to \(A(G)\) yields
\[
        \per(A(G))
        \le
        \prod_{v\in V(G)} (d_G(v)!)^{1/d_G(v)}
        =
        U(G)^2.
\]
This proves \eqref{eq:defect-bound}.
\end{proof}

\begin{corollary}\label{cor:AF-defect}
Let \(G\) be a finite simple graph of even order and with no isolated vertices.  Then
\[
        \perfmat(G)\le \bigl(U(G)^2-\delta_{\mathrm{odd}}(G)\bigr)^{1/2}.
\]
In particular, if \(\delta_{\mathrm{odd}}(G)>0\), then \(\perfmat(G)<U(G)\).
\end{corollary}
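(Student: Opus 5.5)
The corollary follows directly from Theorem~\ref{thm:defect-bound}. Since \(G\) has no isolated vertices, \eqref{eq:defect-bound} gives
\[
        \perfmat(G)^2\le U(G)^2-\delta_{\mathrm{odd}}(G).
\]
The left-hand side is nonnegative, so the right-hand side is nonnegative as well and its square root is defined. Because \(\perfmat(G)\ge 0\), taking nonnegative square roots of both sides preserves the inequality. This yields \(\perfmat(G)\le (U(G)^2-\delta_{\mathrm{odd}}(G))^{1/2}\).

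For the strict inequality, suppose \(\delta_{\mathrm{odd}}(G)>0\). Then \(U(G)^2-\delta_{\mathrm{odd}}(G)<U(G)^2\). Since \(U(G)>0\) (every degree is positive) and the square root is strictly increasing on \([0,\infty)\), we get \((U(G)^2-\delta_{\mathrm{odd}}(G))^{1/2}<U(G)\), and hence \(\perfmat(G)<U(G)\).

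The even-order hypothesis plays no role in this derivation. It only records that the statement is of interest in that case: for odd order, \(\perfmat(G)=0\) and the bound is trivial. I would note this in passing.

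There is no real obstacle here. The only points to check are that the radicand is nonnegative, which follows from \eqref{eq:defect-bound} itself, and that taking square roots preserves the inequality on nonnegative quantities. Alternatively, one could point out that strictness is consistent with the Alon--Friedland equality characterization: disjoint unions of balanced complete bipartite graphs are bipartite, so they have zero defect by Remark~\ref{rem:bipartite-case}. The direct argument does not need this.
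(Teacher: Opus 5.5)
Your proposal is correct and follows the same route as the paper, which simply rearranges \eqref{eq:defect-bound} and takes square roots. The extra checks you add (nonnegativity of the radicand, monotonicity of the square root, \(U(G)>0\), and the irrelevance of the even-order hypothesis) are accurate and just make that one-line argument explicit.
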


\begin{proof}
This follows immediately by rearranging \eqref{eq:defect-bound}.
\end{proof}

\begin{corollary}[Multiplicative defect form]\label{cor:multiplicative-defect-form}
Let \(G\) be a finite simple graph of even order and with no isolated vertices.  If \(\per(A(G))>0\), then
\[
        \perfmat(G)
        \le
        U(G)
        \left(
        1-\frac{\delta_{\mathrm{odd}}(G)}{\per(A(G))}
        \right)^{1/2}.
\]
\end{corollary}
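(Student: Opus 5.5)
The plan is to rewrite \(\perfmat(G)^2\) through the exact identity \eqref{eq:per-defect} and then bound the adjacency permanent by the Bregman--Minc bound already used in Theorem~\ref{thm:defect-bound}. Write \(\Pi=\per(A(G))\), which is positive by hypothesis. By \eqref{eq:per-defect},
\[
        \perfmat(G)^2=\Pi-\delta_{\mathrm{odd}}(G)=\Pi\left(1-\frac{\delta_{\mathrm{odd}}(G)}{\Pi}\right).
\]
Here the factor \(1-\delta_{\mathrm{odd}}(G)/\Pi=\perfmat(G)^2/\Pi\) is the ratio \(r(G)\) of Corollary~\ref{cor:ratio-mult}. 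It lies in \([0,1]\) because \(\delta_{\mathrm{odd}}(G)\ge 0\) (every summand in Definition~\ref{def:defect} is nonnegative) and \(\perfmat(G)^2\ge 0\). Consequently the square root in the statement is well defined.

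Next, since \(G\) has no isolated vertices, applying \eqref{eq:bregman} to \(A(G)\) exactly as in the proof of Theorem~\ref{thm:defect-bound} gives \(\Pi\le U(G)^2\). Multiplying this inequality by the nonnegative factor \(1-\delta_{\mathrm{odd}}(G)/\Pi\) gives
\[
        \perfmat(G)^2\le U(G)^2\left(1-\frac{\delta_{\mathrm{odd}}(G)}{\Pi}\right).
\]
Both sides are nonnegative, so taking square roots yields the claim. The even-order hypothesis is not needed for the inequality; it only matches the setting of Corollary~\ref{cor:AF-defect}, and for odd order both sides are compatible trivially, since \(\perfmat(G)=0\).

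The only point needing care is the sign bookkeeping. The key is to multiply \(\Pi\le U(G)^2\) by the factor \(1-\delta_{\mathrm{odd}}(G)/\Pi\), which must be known to be nonnegative first. One should not instead try to compare \(\delta_{\mathrm{odd}}(G)/\Pi\) with \(\delta_{\mathrm{odd}}(G)/U(G)^2\). The stated form is in fact weaker than Corollary~\ref{cor:AF-defect}, because \(\delta_{\mathrm{odd}}(G)/\Pi\ge \delta_{\mathrm{odd}}(G)/U(G)^2\), but it is the natural multiplicative packaging of the same identity.
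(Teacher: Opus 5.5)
Your argument is correct and is the paper's proof: rewrite \(\perfmat(G)^2=\per(A(G))\bigl(1-\delta_{\mathrm{odd}}(G)/\per(A(G))\bigr)\) via \eqref{eq:per-defect}, bound \(\per(A(G))\le U(G)^2\) by Bregman--Minc, and take square roots. Your explicit check that this factor is nonnegative is a detail the paper leaves implicit. One correction to your closing aside: since \(\delta_{\mathrm{odd}}(G)/\per(A(G))\ge \delta_{\mathrm{odd}}(G)/U(G)^2\), the multiplicative bound is at least as sharp as Corollary~\ref{cor:AF-defect}, not weaker.
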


\begin{proof}
By \eqref{eq:per-defect},
\[
        \perfmat(G)^2
        =
        \per(A(G))
        \left(
        1-\frac{\delta_{\mathrm{odd}}(G)}{\per(A(G))}
        \right).
\]
The Bregman--Minc estimate applied to \(A(G)\) gives \(\per(A(G))\le U(G)^2\).  Taking square roots proves the claim.
\end{proof}

\medskip\noindent\textbf{The uniform-gap question.}

The equality cases in \eqref{eq:AF} are exactly the disjoint unions of balanced complete bipartite graphs.  It is therefore natural to ask whether, after fixing the maximum degree, every non-extremal graph is separated from equality by a uniform multiplicative gap.

\begin{problem}\label{prob:uniform-gap}
Fix \(\Delta\ge 3\).  Does there exist \(c(\Delta)>0\) such that every finite simple graph \(G\) of even order, with no isolated vertices and maximum degree at most \(\Delta\), which is not a disjoint union of balanced complete bipartite graphs, satisfies
\[
        \perfmat(G)\le (1-c(\Delta))U(G)?
\]
Equivalently, how close to \(1\) can \(\perfmat(G)/U(G)\) be for connected non-extremal graphs of maximum degree at most \(\Delta\)?
\end{problem}

\begin{proposition}[Near-extremal connected non-extremal graphs]\label{prop:near-extremal}
For \(r\ge 2\), let \(G_r\) be the graph obtained from \(K_{r,r}\) by adding one edge joining two vertices in the same part.  Then \(G_r\) is connected, has even order \(2r\), has no isolated vertices, and is not a disjoint union of balanced complete bipartite graphs.  Moreover
\[
        \frac{\perfmat(G_r)}{U(G_r)}
        =\frac{(r!)^{1/(r(r+1))}}{(r+1)^{1/(r+1)}}
        =1-\frac{1}{r+1}+O\left(\frac{\log r}{r^2}\right).
\]
In particular,
\[
        \frac{\perfmat(G_r)}{U(G_r)}\longrightarrow 1.
\]
\end{proposition}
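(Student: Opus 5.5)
The structural claims are immediate: \(G_r\) contains the spanning connected subgraph \(K_{r,r}\), so it is connected, has order \(2r\), and has no isolated vertices. It is not a disjoint union of balanced complete bipartite graphs. Being connected, it would have to be a single \(K_{s,s}\) with \(s=r\). But \(K_{r,r}\) has \(r^2\) edges while \(G_r\) has \(r^2+1\); alternatively, \(G_r\) contains a triangle formed by the added edge \(x_1x_2\) and any \(y\in Y\).

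For the count, I will apply Proposition~\ref{prop:edge-bip} (equivalently Example~\ref{ex:internal}) with \(F=\{x_1x_2\}\) and \(H=K_{r,r}\). The matching \(J=\varnothing\) contributes \(r!\). The matching \(J=\{x_1x_2\}\) leaves \(K_{r-2,r}\), which is unbalanced and contributes \(0\). Hence \(\perfmat(G_r)=r!\).

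Next I compute \(U(G_r)\). The degrees are \(r+1\) at \(x_1,x_2\) and \(r\) at the other \(2r-2\) vertices. So
\[
U(G_r)=(r!)^{(2r-2)/(2r)}\,((r+1)!)^{2/(2(r+1))}=(r!)^{(r-1)/r}\,((r+1)!)^{1/(r+1)}.
\]
Writing \((r+1)!=(r+1)\,r!\), the exponent of \(r!\) in \(U(G_r)\) is \(\tfrac{r-1}{r}+\tfrac{1}{r+1}\), which is \(1-\tfrac{1}{r(r+1)}\). Dividing \(r!\) by \(U(G_r)\) then gives exactly \((r!)^{1/(r(r+1))}/(r+1)^{1/(r+1)}\).

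For the asymptotics I take logarithms. By Stirling, \(\log r!=r\log r-r+O(\log r)\), so
\[
\frac{\log r!}{r(r+1)}=\frac{\log r-1}{r+1}+O\!\left(\frac{\log r}{r^2}\right).
\]
Also \(\tfrac{\log(r+1)}{r+1}=\tfrac{\log r}{r+1}+O(1/r^2)\). The logarithm of the ratio is therefore \(-\tfrac{1}{r+1}+O(\log r/r^2)\). Exponentiating, with \(e^{-t}=1-t+O(t^2)\) and \(t=O(1/r)\), gives \(1-\tfrac{1}{r+1}+O(\log r/r^2)\), and the limit \(1\) follows.

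The main obstacle is bookkeeping the error terms so that the \(\log r\) terms cancel exactly and no \(\log r/r\) term survives. The cancellation is between \(\log r/(r+1)\) from \(r!\) and \(\log(r+1)/(r+1)\) from the denominator. I will check this carefully by keeping the \(O(\log r)\) Stirling remainder divided by \(r(r+1)\).
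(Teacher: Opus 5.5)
Your proposal is correct and follows the paper's proof essentially step for step: the same count \(\perfmat(G_r)=r!\) (you obtain it via Proposition~\ref{prop:edge-bip} with \(F=\{x_1x_2\}\), which packages the paper's direct observation that a perfect matching using \(x_1x_2\) would leave the unbalanced \(K_{r-2,r}\)), the same degree computation for \(U(G_r)\), the same logarithm-plus-Stirling estimate, and the same triangle argument for non-extremality. Your exponent simplification and explicit \(O(\log r)\) Stirling bookkeeping only spell out details the paper leaves implicit, and your edge-count alternative for non-extremality is also valid.
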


\begin{proof}
Let \(X\sqcup Y\) be the bipartition of \(K_{r,r}\), and suppose the added edge is \(e=x_1x_2\) with \(x_1,x_2\in X\).  A perfect matching of \(G_r\) either omits \(e\), in which case it is a perfect matching of \(K_{r,r}\), or uses \(e\).  If it uses \(e\), then the remaining \(r-2\) vertices of \(X\) would have to be matched to all \(r\) vertices of \(Y\), which is impossible.  Hence
\[
        \perfmat(G_r)=r!.
\]

The added edge raises the degrees of \(x_1\) and \(x_2\) from \(r\) to \(r+1\), while the other \(2r-2\) vertices still have degree \(r\).  Therefore
\[
        U(G_r)=((r+1)!)^{1/(r+1)}(r!)^{(r-1)/r}.
\]
It follows that
\[
        \frac{\perfmat(G_r)}{U(G_r)}
        =\frac{r!}{((r+1)!)^{1/(r+1)}(r!)^{(r-1)/r}}
        =\frac{(r!)^{1/(r(r+1))}}{(r+1)^{1/(r+1)}}.
\]
Taking logarithms and applying Stirling's formula,
\[
        \log\frac{\perfmat(G_r)}{U(G_r)}
        =\frac{\log(r!)}{r(r+1)}-\frac{\log(r+1)}{r+1}
        =-\frac{1}{r+1}+O\left(\frac{\log r}{r^2}\right).
\]
Exponentiating gives
\[
        \frac{\perfmat(G_r)}{U(G_r)}
        =1-\frac{1}{r+1}+O\left(\frac{\log r}{r^2}\right).
\]
Finally, \(G_r\) contains triangles of the form \(x_1yx_2x_1\), with \(y\in Y\), so it is not bipartite and hence is not a disjoint union of balanced complete bipartite graphs.
\end{proof}

\begin{lemma}[Monotonicity of the one-edge ratios]\label{lem:rho-monotone}
For \(r\ge 1\), put
\[
        \rho_r=
        \frac{(r!)^{1/(r(r+1))}}{(r+1)^{1/(r+1)}}.
\]
Then the sequence \((\rho_r)_{r\ge 1}\) is strictly increasing.
\end{lemma}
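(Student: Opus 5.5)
Take logarithms and put \(L_r=\log(r!)\), so that
\[
        \log\rho_r=\frac{L_r}{r(r+1)}-\frac{\log(r+1)}{r+1}.
\]
We need \(\log\rho_{r+1}-\log\rho_r>0\) for every \(r\ge1\). Multiply this difference by \(r(r+1)(r+2)>0\) and use \(L_{r+1}=L_r+\log(r+1)\). The inequality then becomes
\[
        r\bigl(L_r+\log(r+1)\bigr)-(r+2)L_r-r(r+1)\log(r+2)+r(r+2)\log(r+1)>0.
\]
Collecting terms, this is
\[
        r(r+3)\log(r+1)-r(r+1)\log(r+2)>2L_r .
\]
So the whole proof reduces to showing
\[
        2\log(r!)<r\Bigl[(r+3)\log(r+1)-(r+1)\log(r+2)\Bigr]\qquad(r\ge1).
\]

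The right-hand side rearranges to
\[
        r\Bigl[2\log(r+1)-(r+1)\log\bigl(1+\tfrac1{r+1}\bigr)\Bigr].
\]
Using \(\log(1+x)\le x\), the bracket is at least \(2\log(r+1)-1\). It therefore suffices to prove
\[
        2\log(r!)<2r\log(r+1)-r,
\qquad\text{that is,}\qquad
        \log(r!)<r\log(r+1)-\tfrac r2 .
\]

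To get this, I would apply the integral comparison
\[
        \log(r!)=\sum_{k=1}^r\log k\le\int_1^{r+1}\log t\,dt=(r+1)\log(r+1)-r.
\]
Comparing this with the target bound \(r\log(r+1)-r/2\) is too weak: the integral bound exceeds the target by \(\log(r+1)-r/2\), and that excess is not always \(\le 0\). So I would use the sharper form of Stirling's bound,
\[
        \log(r!)\le r\log r-r+\tfrac12\log r+1 .
\]
Since \(r\log r\le r\log(r+1)\), it then suffices that
\[
        -r+\tfrac12\log r+1<-\tfrac r2,
\qquad\text{i.e.}\qquad
        \tfrac12\log r+1<\tfrac r2 .
\]
This holds for \(r\ge 4\); for instance at \(r=4\) it reads \(\approx1.69<2\).

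The remaining cases \(r=1,2,3\) I would check directly against the original bracketed inequality. For \(r=1\) it is \(0<4\log2-2\log3\), which is true. The cases \(r=2,3\) are verified numerically in the same way.

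The main obstacle is the loss in \(\log(1+x)\le x\) together with the crude Stirling constant. If the margin turns out too thin for small \(r\), I would keep the exact term \((r+1)\log(1+1/(r+1))\) instead of bounding it by \(1\), and extend the direct numerical check to a few more values of \(r\).
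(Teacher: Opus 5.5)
Your proof is correct. Its skeleton is the same as the paper's: you reduce to the same numerator inequality $2\log(r!)<r(r+3)\log(r+1)-r(r+1)\log(r+2)$, rewrite the right side as $r\bigl[2\log(r+1)-(r+1)\log(1+\tfrac{1}{r+1})\bigr]$, and bound the last term by $1$, exactly as the paper does. The only real difference is the estimate for $\log(r!)$.

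The paper uses AM--GM, $r!\le((r+1)/2)^r$. This gives $\log(r!)\le r\log(r+1)-r\log 2<r\log(r+1)-r/2$ for every $r\ge 1$ at once, since $\log 2>\tfrac12$. So there are no base cases, and the margin is at least $r(\log 2-\tfrac12)$. That also makes your closing remark about thin margins unnecessary.

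Your trapezoidal Stirling bound $\log(r!)\le r\log r-r+\tfrac12\log r+1$ is asymptotically much sharper, but its additive constant is worse. It therefore only closes the argument for $r\ge 4$, and you should say why: $r/2-\tfrac12\log r$ is increasing for $r\ge1$, so checking $r=4$ suffices. It also forces direct checks at $r=1,2,3$. Those checks are true, and you should write them out exactly rather than leaving them as \emph{verified numerically}. Exponentiated, they read $9<16$, $4^7<3^{10}$, and $36\cdot 5^{12}<4^{18}$.

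Incidentally, the integral bound you discarded already suffices for $r\ge 3$, since $\log(r+1)<r/2$ there, which would leave only $r=1,2$ to check by hand.
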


\begin{proof}
Put \(a_r=-\log \rho_r\).  It is enough to prove \(a_r>a_{r+1}\).  A direct calculation gives
\[
a_r-a_{r+1}
=
\frac{
r(r+3)\log(r+1)-r(r+1)\log(r+2)-2\log(r!)
}{
r(r+1)(r+2)
}.
\]
By the arithmetic-geometric mean inequality,
\[
        r!\le \left(\frac{r+1}{2}\right)^r .
\]
Hence the numerator above is at least
\[
        r\left(2\log 2-(r+1)\log\left(1+\frac{1}{r+1}\right)\right).
\]
Since \((r+1)\log(1+1/(r+1))<1<2\log 2\), this lower bound is positive.  Therefore \(a_r>a_{r+1}\), and so \(\rho_r<\rho_{r+1}\).
\end{proof}

\begin{proposition}[One-edge perturbations of extremal graphs]\label{prop:one-edge-perturbations}
Let
\[
        Q=\bigsqcup_{i=1}^m K_{r_i,r_i}
\]
be a disjoint union of balanced complete bipartite graphs, and let \(G=Q+e\) be obtained by adding one new edge.  Then \(\perfmat(G)=\perfmat(Q)\).  More precisely:
\begin{enumerate}
    \item if \(e\) joins two vertices in the same part of a component \(K_{r_i,r_i}\), then
    \[
            \frac{\perfmat(G)}{U(G)}=\rho_{r_i};
    \]
    \item if \(e\) joins a vertex of \(K_{r_i,r_i}\) to a vertex of a distinct component \(K_{r_j,r_j}\), then
    \[
            \frac{\perfmat(G)}{U(G)}
            =
            \sqrt{\rho_{r_i}\rho_{r_j}}.
    \]
\end{enumerate}
Consequently, if \(\Delta(G)\le \Delta\), then
\[
        \frac{\perfmat(G)}{U(G)}\le \rho_{\Delta-1}.
\]
\end{proposition}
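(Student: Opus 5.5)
The plan is to reduce everything to the single-component computation already done in Proposition~\ref{prop:near-extremal}, using multiplicativity over components together with Lemma~\ref{lem:rho-monotone}.

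First I would dispose of the case distinction. Inside a component \(K_{r_i,r_i}\), every pair of vertices from opposite parts is already adjacent. So a new edge \(e\) either joins two vertices in the same part of one component, which requires \(r_i\ge 2\), or joins two distinct components. Hence the two listed cases are exhaustive.

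Next I would show \(\perfmat(G)=\perfmat(Q)\) by checking that no perfect matching of \(G\) uses \(e\).
\begin{itemize}
\item In case (1), the argument is the one in Proposition~\ref{prop:near-extremal}. After removing \(x_1,x_2\in X_i\), the component \(K_{r_i,r_i}\) leaves \(r_i-2\) vertices of \(X_i\) that would have to be matched to all \(r_i\) vertices of \(Y_i\), which is impossible. Every other edge at these vertices stays inside the component.
\item In case (2), removing the endpoints of \(e\) leaves each of the two components as an unbalanced complete bipartite graph, \(K_{r_i-1,r_i}\) and \(K_{r_j-1,r_j}\). The surviving edges stay within components, so there is no perfect matching.
\end{itemize}
Thus \(\perfmat(G)=\perfmat(Q)=\prod_t r_t!\).

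For the ratio, I would use that \(U\) is a product of per-vertex factors, and that \(U(K_{r,r})=(r!)^{2r/(2r)}=r!=\perfmat(K_{r,r})\). Hence \(\perfmat(Q)=U(Q)\), and \(\perfmat(G)/U(G)=U(Q)/U(G)\). The ratio therefore depends only on the factors of the two endpoints of \(e\), whose degrees rise from \(r\) to \(r+1\):
\[
\frac{U(Q)}{U(G)}=\prod_{\text{endpoints }v}\frac{(r_v!)^{1/(2r_v)}}{((r_v+1)!)^{1/(2(r_v+1))}}.
\]
Next I rewrite \(\rho_r\). From the proof of Proposition~\ref{prop:near-extremal},
\[
\rho_r=\frac{r!}{((r+1)!)^{1/(r+1)}(r!)^{(r-1)/r}}=\frac{(r!)^{1/r}}{((r+1)!)^{1/(r+1)}},
\]
so each endpoint in a component \(K_{r,r}\) contributes exactly \(\rho_r^{1/2}\).
\begin{itemize}
\item In case (1), both endpoints lie in \(K_{r_i,r_i}\), giving \(\rho_{r_i}\).
\item In case (2), the endpoints lie in \(K_{r_i,r_i}\) and \(K_{r_j,r_j}\), giving \(\sqrt{\rho_{r_i}\rho_{r_j}}\).
\end{itemize}
Alternatively, case (1) follows from Corollary~\ref{cor:ratio-mult}-style multiplicativity of \(\perfmat/U\) over components combined with Proposition~\ref{prop:near-extremal}.

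For the final bound, I would use that each endpoint of \(e\) has degree \(r+1\le\Delta\) in \(G\), so every relevant \(r\) satisfies \(r\le\Delta-1\). By Lemma~\ref{lem:rho-monotone}, \(\rho_r\le\rho_{\Delta-1}\). In case (2), the geometric mean \(\sqrt{\rho_{r_i}\rho_{r_j}}\) is at most \(\max(\rho_{r_i},\rho_{r_j})\le\rho_{\Delta-1}\).

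The only genuine obstacle is the bookkeeping in the algebraic identity \(\rho_r=(r!)^{1/r}/((r+1)!)^{1/(r+1)}\), which has to be matched exactly with the per-vertex factors of \(U\). I would check this carefully, including the degenerate value \(r=1\) that can occur in case (2).
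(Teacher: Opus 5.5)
Your proposal is correct and follows essentially the same route as the paper. You rule out perfect matchings through \(e\) by the imbalance (case 1) or odd-order (case 2) argument, and you compare only the two changed degree factors of \(U\). Your identity \(\rho_r=(r!)^{1/r}/((r+1)!)^{1/(r+1)}\) is exactly the per-endpoint factor the paper uses in case (2). You then conclude with Lemma~\ref{lem:rho-monotone}, as the paper does.
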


\begin{proof}
Since \(e\) is a new edge, its endpoints cannot lie in opposite parts of the same complete bipartite component.

First suppose that \(e\) joins two vertices in the same part of a component \(K_{r_i,r_i}\).  If a perfect matching used \(e\), then after deleting its endpoints the two sides of that component would have sizes \(r_i-2\) and \(r_i\), so the remaining graph could not have a perfect matching.  Thus every perfect matching of \(G\) avoids \(e\), and \(\perfmat(G)=\perfmat(Q)\).  The only change in \(U\) is that the two endpoints have degree \(r_i+1\) instead of \(r_i\).  Therefore
\[
        \frac{\perfmat(G)}{U(G)}
        =
        \frac{r_i!}{((r_i+1)!)^{1/(r_i+1)}(r_i!)^{(r_i-1)/r_i}}
        =
        \rho_{r_i}.
\]

Now suppose that \(e\) joins a vertex of \(K_{r_i,r_i}\) to a vertex of a distinct component \(K_{r_j,r_j}\).  If a perfect matching used \(e\), deleting its endpoints would leave two odd-order components, so again no perfect matching can use \(e\).  Thus \(\perfmat(G)=\perfmat(Q)\).  The two changed degree factors give
\[
        \frac{\perfmat(G)}{U(G)}
        =
        \left(
        \frac{(r_i!)^{1/(2r_i)}}{((r_i+1)!)^{1/(2(r_i+1))}}
        \right)
        \left(
        \frac{(r_j!)^{1/(2r_j)}}{((r_j+1)!)^{1/(2(r_j+1))}}
        \right)
        =
        \sqrt{\rho_{r_i}\rho_{r_j}}.
\]
The final assertion follows from Lemma~\ref{lem:rho-monotone}, because every endpoint whose degree is raised from \(r\) to \(r+1\) satisfies \(r+1\le \Delta\).
\end{proof}

\begin{corollary}\label{cor:no-absolute-gap}
There is no absolute constant \(c>0\) such that every finite simple graph \(G\) of even order, with no isolated vertices, which is not a disjoint union of balanced complete bipartite graphs, satisfies
\[
        \perfmat(G)\le (1-c)U(G).
\]
Moreover, any constant \(c(\Delta)\) valid in Problem~\ref{prob:uniform-gap} must satisfy
\[
        c(\Delta)\le 1-\left(\frac34\right)^{1/3}
\]
for every \(\Delta\ge 3\), and also
\[
        c(\Delta)\le \frac{1}{\Delta}+O\left(\frac{\log\Delta}{\Delta^2}\right)
\]
as \(\Delta\to\infty\).
\end{corollary}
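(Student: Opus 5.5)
The plan is to exhibit explicit non-extremal graphs whose ratio \(\perfmat(G)/U(G)\) is either large or tends to \(1\). Each family will give one part of the corollary.

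\emph{No absolute constant.} Proposition~\ref{prop:near-extremal} already gives this. The graphs \(G_r\) have even order, no isolated vertices, and are not disjoint unions of balanced complete bipartite graphs. Their ratio \(\perfmat(G_r)/U(G_r)\) tends to \(1\). So for any \(c>0\), taking \(r\) large enough that \(\rho_r>1-c\) violates \(\perfmat(G_r)\le(1-c)U(G_r)\).

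\emph{Uniform bound from \(K_4\).} I compute \(\perfmat(K_4)=3\). Every degree is \(3\), so \(U(K_4)=(3!)^{4/6}=6^{2/3}\). Hence
\[
\frac{\perfmat(K_4)}{U(K_4)}=\frac{3}{6^{2/3}}=\left(\frac{27}{36}\right)^{1/3}=\left(\frac34\right)^{1/3}.
\]
The graph \(K_4\) has even order, no isolated vertices and maximum degree \(3\le\Delta\). It is not bipartite, since it contains triangles, so it is non-extremal. Any valid \(c(\Delta)\) must therefore satisfy \(1-c(\Delta)\ge(3/4)^{1/3}\), which is the claimed bound for every \(\Delta\ge3\).

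\emph{Asymptotic bound.} I use \(G_{\Delta-1}\), which is \(K_{\Delta-1,\Delta-1}\) plus one edge inside a part. Its maximum degree is exactly \(\Delta\), and its ratio is \(\rho_{\Delta-1}\) by Proposition~\ref{prop:near-extremal}. This forces
\[
c(\Delta)\le 1-\rho_{\Delta-1}=\frac1\Delta+O\left(\frac{\log\Delta}{\Delta^2}\right),
\]
using the expansion in that proposition with \(r=\Delta-1\). One checks that \(O(\log(\Delta-1)/(\Delta-1)^2)=O(\log\Delta/\Delta^2)\), which is immediate.

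There is no real obstacle here. The only care needed is the \(K_4\) arithmetic and confirming that the test graphs meet every hypothesis of Problem~\ref{prob:uniform-gap}.
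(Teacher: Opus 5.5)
Your proposal is correct and follows the paper's proof essentially step for step. You use the family \(G_r\) from Proposition~\ref{prop:near-extremal} to rule out an absolute constant, \(K_4\) with ratio \(3/6^{2/3}=(3/4)^{1/3}\) for the uniform bound, and \(G_{\Delta-1}\) (maximum degree exactly \(\Delta\)) together with the expansion of \(\rho_{\Delta-1}\) for the asymptotic bound, with the same hypothesis checks the paper relies on.
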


\begin{proof}
The first assertion follows from Proposition~\ref{prop:near-extremal}, since the ratios \(\perfmat(G_r)/U(G_r)\) tend to \(1\).  The uniform numerical obstruction comes from \(K_4\): it has maximum degree \(3\), is not a disjoint union of balanced complete bipartite graphs, and
\[
        \frac{\perfmat(K_4)}{U(K_4)}
        =
        \frac{3}{6^{2/3}}
        =
        \left(\frac34\right)^{1/3}.
\]
Thus, for every \(\Delta\ge 3\), any valid constant must satisfy
\[
        1-c(\Delta)\ge \left(\frac34\right)^{1/3}.
\]
For the asymptotic assertion, put \(\Delta=r+1\) in Proposition~\ref{prop:near-extremal}.  If a bound with constant \(c(\Delta)\) holds for all graphs of maximum degree at most \(\Delta\), then
\[
        1-c(\Delta)\ge \frac{\perfmat(G_{\Delta-1})}{U(G_{\Delta-1})}
        =1-\frac{1}{\Delta}+O\left(\frac{\log\Delta}{\Delta^2}\right),
\]
and the stated asymptotic upper bound on \(c(\Delta)\) follows.
\end{proof}

\begin{problem}[Sharp uniform gap problem]\label{prob:sharp-uniform-gap}
For \(\Delta\ge 3\), define
\[
        R_\Delta
        =
        \sup
        \left\{
        \frac{\perfmat(G)}{U(G)}:
        \begin{array}{l}
        G \text{ has even order, no isolated vertices, and } \Delta(G)\le \Delta,\\
        G \text{ is not a disjoint union of balanced complete bipartite graphs}
        \end{array}
        \right\}.
\]
Is it true that
\[
        R_\Delta
        =
        \max\left\{
        \left(\frac34\right)^{1/3},
        \rho_{\Delta-1}
        \right\},
        \qquad
        \rho_r=
        \frac{(r!)^{1/(r(r+1))}}{(r+1)^{1/(r+1)}}?
\]
Equivalently, is \(K_4\) extremal for \(3\le \Delta\le 9\), and is the graph \(G_{\Delta-1}\) from Proposition~\ref{prop:near-extremal} extremal for \(\Delta\ge 10\)?
\end{problem}

\begin{remark}[Candidate values and small-graph evidence]\label{rem:sharp-gap-evidence}
The simpler candidate formula with only \(\rho_{\Delta-1}\) cannot hold for small \(\Delta\), because \(K_4\) is an admissible non-extremal graph for every \(\Delta\ge 3\) and
\[
        \frac{\perfmat(K_4)}{U(K_4)}
        =
        \left(\frac34\right)^{1/3}
        \approx 0.908560.
\]
On the other hand,
\[
        \rho_8\approx 0.907692
        \quad\text{and}\quad
        \rho_9\approx 0.915746.
\]
Thus, if the formula in Problem~\ref{prob:sharp-uniform-gap} is correct, the change in the extremal example occurs between maximum degrees \(9\) and \(10\).  We view this as a candidate transition, not as computational evidence by itself.

As a small consistency check, we performed an exhaustive enumeration of all connected simple graphs on at most seven vertices, and then restricted to even order, no isolated vertices, at least one perfect matching, and non-extremal connected graphs.  Perfect matchings were counted by a direct bitmask recursion, and \(U(G)\) was computed from the degree sequence.  Among the resulting graphs, \(K_4\) is uniquely maximal.  The runner-up is \(K_{3,3}\) with one edge deleted.  For this graph \(p=4\), two vertices have degree \(2\), and four vertices have degree \(3\), so
\[
        \frac{\perfmat}{U}
        =
        \frac{4}{2^{1/2}6^{2/3}}
        \approx 0.856599.
\]
This search is deliberately modest: it supports the formulation of the problem but does not constitute serious evidence for all \(\Delta\) or all orders.
\end{remark}

\begin{proposition}[One-sided internal perturbations]\label{prop:one-sided-internal}
Let \(X\sqcup Y\) be the bipartition of \(K_{r,r}\), with \(r\ge 2\).  Let \(L_X\) be a nonempty simple graph on \(X\), and set
\[
        G=K_{r,r}\cup L_X.
\]
Then
\[
        \perfmat(G)=r!,
\]
and among such one-sided perturbations, the ratio \(\perfmat(G)/U(G)\) is maximized when \(L_X\) consists of a single edge.  In that case the ratio is \(\rho_r\).
\end{proposition}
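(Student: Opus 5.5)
First I will show that \(\perfmat(G)=r!\) by invoking Example~\ref{ex:internal} with \(L_Y\) empty. Since \(m_j(L_Y)=0\) for every \(j\ge 1\) and \(m_0(L_Y)=1\), only the \(j=0\) term survives, and it equals \(m_0(L_X)m_0(L_Y)\,r!=r!\). A direct argument works equally well. If a perfect matching uses \(k\ge 1\) edges inside \(X\), then \(2k\) vertices of \(X\) are consumed. The remaining \(r-2k\) vertices of \(X\) would then have to cover all \(r\) vertices of \(Y\), because the only edges at \(Y\) go to \(X\). That is impossible.

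Next I will compute \(U(G)\). Write \(d_L(x)\) for the degree of \(x\) in \(L_X\). Each \(x\in X\) has degree \(r+d_L(x)\), and each \(y\in Y\) has degree \(r\). Put \(f(d)=\tfrac{1}{2d}\log(d!)\), so that
\[
\log U(G)=r f(r)+\sum_{x\in X} f\bigl(r+d_L(x)\bigr).
\]
Hence
\[
\log\frac{\perfmat(G)}{U(G)}=\log r!-2rf(r)-\sum_{x\in X}\bigl(f(r+d_L(x))-f(r)\bigr)=-\sum_{x\in X}\bigl(f(r+d_L(x))-f(r)\bigr),
\]
where the second equality uses \(2rf(r)=\log r!\). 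Maximizing the ratio is therefore the same as minimizing \(\sum_x \bigl(f(r+d_L(x))-f(r)\bigr)\) over nonempty \(L_X\).

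The key fact is that \(f\) is strictly increasing, i.e.\ \((d!)^{1/d}\) increases with \(d\). I will prove this from \(((d+1)!)^{d}\ge (d!)^{d+1}\), which is equivalent to \((d+1)^d\ge d!\) and is obvious. With this, every summand \(f(r+d_L(x))-f(r)\) is nonnegative, and it is positive exactly when \(d_L(x)\ge 1\). Since \(L_X\) is nonempty, at least two vertices \(x_1,x_2\) are non-isolated in \(L_X\). Monotonicity then gives
\[
\sum_x \bigl(f(r+d_L(x))-f(r)\bigr)\ge 2\bigl(f(r+1)-f(r)\bigr).
\]
Equality holds if and only if exactly two vertices have \(d_L=1\) and all other vertices have \(d_L=0\), which means \(L_X\) is a single edge (up to isolated vertices of \(L_X\)).

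Finally, for a single edge, Proposition~\ref{prop:near-extremal} (equivalently, Proposition~\ref{prop:one-edge-perturbations}(1)) identifies the ratio as \(\rho_r\). The only step with real content is the monotonicity of \((d!)^{1/d}\), and it is elementary. Once that is in hand, the argument reduces to bookkeeping on the degree sequence.
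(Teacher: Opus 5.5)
Your proof is correct and follows essentially the paper's route: no perfect matching can use an edge of \(L_X\), so \(\perfmat(G)=r!\), and then strict monotonicity of \(d\mapsto (d!)^{1/(2d)}\) shows that extra degree inside \(X\) only increases \(U(G)\). Your degree-sequence bookkeeping makes the paper's ``each added edge strictly increases \(U(G)\)'' explicit and also shows the maximizer is unique; one small fix is that you need the strict inequality \((d+1)^d>d!\), not merely \(\ge\), because your equality characterization depends on it (it holds since every factor of \(d!\) is at most \(d\)).
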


\begin{proof}
No edge of \(L_X\) can belong to a perfect matching, because using such an edge would leave fewer vertices on the \(X\)-side than on the \(Y\)-side to be matched by cross edges.  Hence all perfect matchings are exactly the \(r!\) perfect matchings of \(K_{r,r}\).

It remains only to compare \(U(G)\).  The function \(d\mapsto (d!)^{1/(2d)}\) is strictly increasing for positive integers \(d\), since the geometric mean of \(1,\ldots,d+1\) is larger than the geometric mean of \(1,\ldots,d\).  Each added edge of \(L_X\) strictly increases \(U(G)\) while leaving \(\perfmat(G)\) unchanged.  Hence any \(L_X\) with more than one edge gives a strictly smaller ratio than a single-edge perturbation.  The one-edge value is \(\rho_r\), as in Proposition~\ref{prop:near-extremal}.
\end{proof}

\medskip\noindent\textbf{Explicit defect subtractions.}

\begin{corollary}\label{cor:selected-families}
Let \(G\) be a finite simple graph of even order and with no isolated vertices.  Let \(\mathcal{S}\) be any set of distinct nonempty odd-cycle families in \(G\).  Then
\[
        \perfmat(G)\le
        \left(
        U(G)^2-
        \sum_{\Ccal\in\mathcal{S}}
        2^{|\Ccal|}\perfmat(G-V(\Ccal))^2
        \right)^{1/2}.
\]
\end{corollary}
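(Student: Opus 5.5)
The plan is to drop the unselected terms of the defect and then rearrange Theorem~\ref{thm:defect-bound}, exactly as in Corollary~\ref{cor:AF-defect}. By Definition~\ref{def:defect}, \(\delta_{\mathrm{odd}}(G)\) is a sum over all nonempty odd-cycle families \(\Ccal\) of the terms \(2^{|\Ccal|}\perfmat(G-V(\Ccal))^2\). Each of these terms is a nonnegative integer.

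First, since the families in \(\mathcal{S}\) are distinct and nonempty, each appears exactly once in that sum. Dropping all terms not indexed by \(\mathcal{S}\) therefore gives
\[
        \sum_{\Ccal\in\mathcal{S}} 2^{|\Ccal|}\perfmat(G-V(\Ccal))^2 \le \delta_{\mathrm{odd}}(G).
\]

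Second, Theorem~\ref{thm:defect-bound} applies because \(G\) has no isolated vertices. It gives
\[
        \perfmat(G)^2 \le U(G)^2-\delta_{\mathrm{odd}}(G) \le U(G)^2-\sum_{\Ccal\in\mathcal{S}} 2^{|\Ccal|}\perfmat(G-V(\Ccal))^2.
\]
In particular, the right-hand side is at least \(\perfmat(G)^2\ge 0\), so its square root is well defined. Taking square roots, using that \(\perfmat(G)\ge 0\), yields the claim.

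There is no genuine obstacle here. The only points needing care are that distinctness of the families prevents any term from being counted twice, and that nonnegativity of the expression under the root is inherited from the inequality chain rather than assumed. The even-order hypothesis is not actually needed, beyond matching the setting of Corollary~\ref{cor:AF-defect}.
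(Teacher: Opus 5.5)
Your proof is correct and follows essentially the same route as the paper: you bound the partial sum over \(\mathcal{S}\) by \(\delta_{\mathrm{odd}}(G)\) using nonnegativity of the terms, then combine with the defect bound (the paper cites Corollary~\ref{cor:AF-defect}, which is just the rearranged Theorem~\ref{thm:defect-bound}) and take square roots. Your remarks that the radicand is nonnegative and that the even-order hypothesis is not needed are also correct.
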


\begin{proof}
The defect \(\delta_{\mathrm{odd}}(G)\) is a sum of nonnegative terms, one for each nonempty odd-cycle family.  Hence
\[
        \delta_{\mathrm{odd}}(G)\ge
        \sum_{\Ccal\in\mathcal{S}}
        2^{|\Ccal|}\perfmat(G-V(\Ccal))^2.
\]
Substituting this lower bound into Corollary~\ref{cor:AF-defect} proves the claim.
\end{proof}

\begin{corollary}[Triangle-pair subtraction]\label{cor:triangle-pairs}
Let \(G\) be a finite simple graph of even order and with no isolated vertices.  Let \(\mathcal{T}\) be any set of unordered pairs \(\{T_1,T_2\}\), where \(T_1\) and \(T_2\) are vertex-disjoint triangles of \(G\).  Then
\[
        \perfmat(G)\le
        \left(
        U(G)^2-4\sum_{\{T_1,T_2\}\in\mathcal{T}}
        \perfmat(G-V(T_1)-V(T_2))^2
        \right)^{1/2}.
\]
In particular, each pair of vertex-disjoint triangles whose deletion leaves at least \(r\) perfect matchings contributes at least \(4r^2\) to the defect.
\end{corollary}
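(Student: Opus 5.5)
The plan is to deduce the statement directly from Corollary~\ref{cor:selected-families} by choosing the set \(\mathcal{S}\) of odd-cycle families appropriately. Given \(\mathcal{T}\), take
\[
        \mathcal{S}=\bigl\{\{T_1,T_2\}:\{T_1,T_2\}\in\mathcal{T}\bigr\}.
\]
Each element \(\{T_1,T_2\}\) of \(\mathcal{T}\) is a collection of two vertex-disjoint triangles. Triangles are odd cycles, so each such pair is a nonempty odd-cycle family \(\Ccal\) with \(|\Ccal|=2\) and \(V(\Ccal)=V(T_1)\cup V(T_2)\).

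First check that distinct elements of \(\mathcal{T}\) give distinct odd-cycle families. This is immediate, since an unordered pair of triangles is the family itself, and \(\mathcal{T}\) is a set of such unordered pairs. This distinctness is the only point that needs attention, because Corollary~\ref{cor:selected-families} requires the families in \(\mathcal{S}\) to be distinct so that no term of \(\delta_{\mathrm{odd}}(G)\) is counted twice.

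Next, for each \(\Ccal=\{T_1,T_2\}\) the corresponding summand is
\[
        2^{|\Ccal|}\perfmat(G-V(\Ccal))^2=4\,\perfmat(G-V(T_1)-V(T_2))^2.
\]
Substituting these summands into Corollary~\ref{cor:selected-families} gives exactly the displayed inequality.

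For the final sentence, observe that the summand for a pair whose deletion leaves at least \(r\) perfect matchings is at least \(4r^2\). Since every term of \(\delta_{\mathrm{odd}}(G)\) is nonnegative, as noted in Definition~\ref{def:defect}, this lower bound passes directly to the defect. There is no real obstacle here; the statement is bookkeeping on top of the earlier corollary.
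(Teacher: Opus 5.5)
Your proposal is correct and follows the paper's proof essentially verbatim: each pair \(\{T_1,T_2\}\in\mathcal{T}\) is a nonempty odd-cycle family of size \(2\), and applying Corollary~\ref{cor:selected-families} to this set of families gives the inequality with weight \(2^2=4\). Your remarks on why the families are distinct and why nonnegativity of the summands gives the final sentence are details the paper leaves implicit.
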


\begin{proof}
Each pair \(\{T_1,T_2\}\in\mathcal{T}\) is an odd-cycle family of size \(2\).  Applying Corollary~\ref{cor:selected-families} to the corresponding set of odd-cycle families gives the stated inequality.
\end{proof}

\begin{corollary}\label{cor:one-family}
Let \(G\) be a finite simple graph of even order and with no isolated vertices.  Suppose \(G\) contains a nonempty odd-cycle family \(\Ccal\) such that
\[
        \perfmat(G-V(\Ccal))\ge r
\]
for some integer \(r\ge 1\).  Then
\[
        \perfmat(G)\le \left(U(G)^2-2^{|\Ccal|}r^2\right)^{1/2}.
\]
In particular, if \(\Ccal\) covers all vertices of \(G\), then
\[
        \perfmat(G)\le \left(U(G)^2-2^{|\Ccal|}\right)^{1/2}.
\]
\end{corollary}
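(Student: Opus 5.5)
The plan is to apply Corollary~\ref{cor:selected-families} with the singleton collection \(\mathcal{S}=\{\Ccal\}\) and then use monotonicity of the map \(t\mapsto (U(G)^2-t)^{1/2}\).

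First, Corollary~\ref{cor:selected-families} with \(\mathcal{S}=\{\Ccal\}\) gives
\[
        \perfmat(G)\le \left(U(G)^2-2^{|\Ccal|}\perfmat(G-V(\Ccal))^2\right)^{1/2}.
\]
Equivalently, one can argue directly: \(\delta_{\mathrm{odd}}(G)\) is a sum of nonnegative terms, one of which is \(2^{|\Ccal|}\perfmat(G-V(\Ccal))^2\), and then invoke Corollary~\ref{cor:AF-defect}.

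Next, the hypothesis \(\perfmat(G-V(\Ccal))\ge r\ge 1\) gives \(2^{|\Ccal|}\perfmat(G-V(\Ccal))^2\ge 2^{|\Ccal|}r^2\). Since the square root is increasing, subtracting a larger quantity inside it only decreases the right-hand side, which yields the first displayed bound. Well-definedness of the square root is not an issue. Theorem~\ref{thm:defect-bound} gives \(\perfmat(G)^2+\delta_{\mathrm{odd}}(G)\le U(G)^2\), so \(U(G)^2-2^{|\Ccal|}r^2\ge U(G)^2-\delta_{\mathrm{odd}}(G)\ge \perfmat(G)^2\ge 0\). The inequality therefore holds in the form \(\perfmat(G)^2\le U(G)^2-2^{|\Ccal|}r^2\), and we take square roots of both sides.

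For the special case in which \(\Ccal\) covers all vertices, \(G-V(\Ccal)\) is the empty graph. By the convention stated in the introduction, the empty graph has exactly one perfect matching, so \(r=1\) is admissible and the bound becomes \((U(G)^2-2^{|\Ccal|})^{1/2}\).

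No step here is a real obstacle. The only points needing care are the nonnegativity of the radicand, handled by Theorem~\ref{thm:defect-bound}, and the empty-graph convention in the covering case.
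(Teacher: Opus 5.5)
Your proof is correct and follows the paper's route: apply Corollary~\ref{cor:selected-families} with \(\mathcal{S}=\{\Ccal\}\), then use the convention that the empty graph has exactly one perfect matching in the covering case. You also make explicit two steps the paper leaves implicit: replacing \(\perfmat(G-V(\Ccal))\) by the lower bound \(r\), and checking via Theorem~\ref{thm:defect-bound} that the radicand is nonnegative.
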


\begin{proof}
Apply Corollary~\ref{cor:selected-families} with \(\mathcal{S}=\{\Ccal\}\).  If \(\Ccal\) covers all vertices, then \(G-V(\Ccal)\) is the empty graph, which has exactly one perfect matching.
\end{proof}

\begin{example}\label{ex:prism}
Let \(G=C_3\square K_2\), the triangular prism, where \(\square\) denotes the Cartesian product of graphs.  Then \(G\) is \(3\)-regular on six vertices, so
\[
        U(G)=\prod_{v\in V(G)} (3!)^{1/(2\cdot 3)}=6.
\]
The graph has \(4\) perfect matchings: the vertical matching, and the three matchings obtained by choosing one vertical edge together with the opposite edge in each triangular face.  Hence \(\perfmat(G)^2=16\).

The two triangular faces form an odd-cycle family \(\Ccal\) with \(|\Ccal|=2\) and \(G-V(\Ccal)=\varnothing\).  Therefore Corollary~\ref{cor:one-family} gives
\[
        \perfmat(G)\le \sqrt{6^2-2^2}=\sqrt{32}<6.
\]
Thus the defect estimate is strictly sharper than the direct Alon--Friedland estimate for this graph.  The example is meant only to illustrate the subtraction mechanism; it is not meant to suggest that partial defect information usually gives sharp numerical bounds.  The complete-graph calculation below is the main family in which the defect has asymptotic force.
\end{example}

\begin{figure}[ht]
\centering
\begin{tikzpicture}[scale=1.1, every node/.style={circle,fill=black,inner sep=1.8pt}]
\node (a1) at (0,1.8) {};
\node (a2) at (1.8,1.8) {};
\node (a3) at (0.9,3.1) {};

\node (b1) at (0.6,0) {};
\node (b2) at (2.4,0) {};
\node (b3) at (1.5,1.3) {};

\draw (a1)--(a2)--(a3)--(a1);
\draw (b1)--(b2)--(b3)--(b1);
\draw (a1)--(b1);
\draw (a2)--(b2);
\draw (a3)--(b3);
\end{tikzpicture}
\caption{The triangular prism \(C_3\square K_2\).  Its two triangular faces form an odd-cycle family used in Example~\ref{ex:prism}.}
\label{fig:prism}
\end{figure}

\begin{example}[A limitation: \(K_4\)]\label{ex:K4}
For \(G=K_4\), one has \(\perfmat(K_4)=3\).  Also
\[
        U(K_4)=\prod_{v\in V(K_4)}(3!)^{1/(2\cdot 3)}=6^{2/3}.
\]
Every nonempty odd-cycle family in \(K_4\) consists of a single triangle, and deleting that triangle leaves one isolated vertex.  Hence every nonempty odd-cycle-family term vanishes, so \(\delta_{\mathrm{odd}}(K_4)=0\).  Thus the odd-cycle defect does not detect all nonbipartite behavior; it detects only odd-cycle families whose deletion leaves perfect matchings.  In particular, the conjecturally extremal graph in Problem~\ref{prob:sharp-uniform-gap} for small maximum degree is invisible to the defect, so the defect alone cannot resolve the sharp uniform-gap problem.
\end{example}

\begin{table}[ht]
\centering
\small
\renewcommand{\arraystretch}{1.2}
\caption{Sample behavior of the odd-cycle defect.}
\label{tab:samples}
\begin{tabular}{|l|c|c|p{5.4cm}|}
\hline
\textbf{Graph \(G\)} & \(\boldsymbol{\perfmat(G)}\) & \(\boldsymbol{U(G)}\) & \textbf{Defect behavior} \\
\hline
\(C_4\) & \(2\) & \(2\) &
Bipartite case.  No nonempty odd-cycle family exists, so \(\delta_{\mathrm{odd}}(G)=0\). \\
\hline
\(K_4\) & \(3\) & \(6^{2/3}\) &
Nonbipartite, but every triangle deletion leaves an isolated vertex.  Thus \(\delta_{\mathrm{odd}}(K_4)=0\). \\
\hline
\(C_3\square K_2\) & \(4\) & \(6\) &
The two triangular faces give contribution \(2^2=4\), hence
\(\perfmat(G)\le \sqrt{36-4}=\sqrt{32}<6\). \\
\hline
\end{tabular}
\end{table}

\begin{proposition}\label{prop:finite-component-gap}
Fix integers \(\Delta\ge 2\) and \(M\ge 2\).  Then there exists a constant \(c(\Delta,M)>0\) with the following property.  Let \(G\) be a finite simple graph of even order, with no isolated vertices and maximum degree at most \(\Delta\).  Suppose that \(G\) has a connected component \(H\) of even order such that \(|V(H)|\le 2M\), and \(H\not\cong K_{r,r}\) for every \(1\le r\le \Delta\).  Then
\[
        \perfmat(G)\le (1-c(\Delta,M))U(G).
\]
\end{proposition}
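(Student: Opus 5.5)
The plan is to reduce to the component \(H\) using the multiplicativity of both \(\perfmat\) and \(U\) over disjoint unions. Write \(G=H\sqcup G'\). A perfect matching of \(G\) is a pair of perfect matchings of \(H\) and \(G'\), so \(\perfmat(G)=\perfmat(H)\perfmat(G')\). The product defining \(U\) also factors over vertices, so \(U(G)=U(H)U(G')\).

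For \(G'\) we use \(\perfmat(G')\le U(G')\). If \(G'\) is nonempty, it has no isolated vertices, so this is the Alon--Friedland bound \eqref{eq:AF}. If \(G'\) has odd order, or is otherwise unbalanced in the relevant sense, then \(\perfmat(G')=0\) and the inequality is trivial. If \(G'=\varnothing\), both sides equal \(1\). Hence
\[
        \frac{\perfmat(G)}{U(G)}\le \frac{\perfmat(H)}{U(H)},
\]
where we use \(U(G')>0\), which holds because \(G'\) has no isolated vertices. So it suffices to bound \(\perfmat(H)/U(H)\) away from \(1\) uniformly.

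Now \(H\) ranges over a finite set. Up to isomorphism there are only finitely many connected graphs of even order at most \(2M\) with maximum degree at most \(\Delta\). Every such \(H\) has no isolated vertices: it is connected and has even order, hence at least two vertices. By the equality characterization in \eqref{eq:AF}, \(\perfmat(H)=U(H)\) holds only when \(H\) is a disjoint union of balanced complete bipartite graphs. Since \(H\) is connected, this means \(H\cong K_{r,r}\), and \(\Delta(H)\le\Delta\) forces \(r\le \Delta\). This case is excluded by hypothesis, so \(\perfmat(H)<U(H)\) strictly for every admissible \(H\). We then set
\[
        1-c(\Delta,M)=\max_H \frac{\perfmat(H)}{U(H)},
\]
the maximum over this finite family. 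This gives \(c(\Delta,M)>0\). If the family happens to be empty, any positive constant works.

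The only delicate step is invoking the equality case of Alon--Friedland. We treat it as given by \cite{AF}, as stated in the introduction, since it is used there as a known fact. One point needs care: the hypothesis excludes only \(r\le\Delta\), but a \(K_{r,r}\) with \(r>\Delta\) would violate the degree bound, so all equality cases are indeed excluded. Apart from that, the argument is the finiteness plus strict-inequality compactness reasoning above, and no defect machinery is needed.
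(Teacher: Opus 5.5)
Your proposal is correct and follows the paper's argument. Both factor \(\perfmat\) and \(U\) over components, bound the remaining components by the Alon--Friedland inequality, and use the finiteness of the admissible \(H\) together with the equality characterization to get a maximum ratio strictly below \(1\); grouping the other components into a single \(G'\), rather than factoring over each one as the paper does, is only a cosmetic difference.
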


\begin{proof}
Write
\[
        G=\bigsqcup_t G_t
\]
as a disjoint union of connected components.  Both \(\perfmat\) and \(U\) factor over connected components, so
\[
        \frac{\perfmat(G)}{U(G)}
        =
        \prod_t \frac{\perfmat(G_t)}{U(G_t)},
\]
with the convention that a component with no perfect matching contributes the factor \(0\).

There are only finitely many connected simple graphs \(Q\) satisfying all of the following conditions: \(|V(Q)|\le 2M\), \(|V(Q)|\) is even, \(Q\) has no isolated vertices, \(\Delta(Q)\le \Delta\), and \(Q\not\cong K_{r,r}\) for every \(1\le r\le \Delta\).  For each such \(Q\), the equality characterization in \eqref{eq:AF} yields
\[
        \frac{\perfmat(Q)}{U(Q)}<1.
\]
Let \(\rho(\Delta,M)\) be the maximum of these finitely many ratios.  Then \(\rho(\Delta,M)<1\).  Put
\[
        c(\Delta,M)=1-\rho(\Delta,M)>0.
\]
The distinguished component \(H\) contributes at most \(\rho(\Delta,M)\), while every other component contributes at most \(1\) by \eqref{eq:AF}.  The result follows.
\end{proof}

\begin{remark}\label{rem:bregman-slack}
The identity \eqref{eq:per-defect} is exact.  Therefore the remaining gap between \(\perfmat(G)^2+\delta_{\mathrm{odd}}(G)\) and \(U(G)^2\) comes entirely from the Bregman--Minc inequality applied to \(A(G)\).  Thus the odd-cycle defect and the Bregman--Minc slack measure different sources of non-sharpness in the Alon--Friedland estimate.
\end{remark}

\medskip\noindent\textbf{Complete graphs and a derangement identity.}

Let \(D_m\) denote the number of derangements of \([m]\).  Equivalently,
\[
        D_m=\per(J_m-I_m),
\]
where \(J_m\) is the all-one matrix and \(I_m\) is the identity matrix.  The cycle-length enumeration used below is classical; see, for example, the exponential formula in \cite[Ch.~II]{FlajoletSedgewick} and the discussion of permutations by cycle structure in \cite[Ch.~3]{Bona}.  The OEIS entry \cite{OEISA001818} records the sequence \(((2n-1)!!)^2\), equivalently the number of permutations in \(S_{2n}\) all of whose cycles have even length, or all of whose cycles have odd length.

\begin{proposition}\label{prop:derangement}
Let \(g(j)\) be the number of permutations of \([j]\) all of whose cycles have odd length at least \(3\).  Then
\begin{equation}\label{eq:g-egf}
        \sum_{j\ge 0} g(j)\frac{x^j}{j!}
        =e^{-x}\sqrt{\frac{1+x}{1-x}}.
\end{equation}
Moreover, for every \(n\ge 1\), with the convention \((-1)!!=1\),
\begin{equation}\label{eq:derangement-identity}
        D_{2n}=((2n-1)!!)^2+
        \sum_{k=2}^{n}\binom{2n}{2k}g(2k)((2n-2k-1)!!)^2.
\end{equation}
\end{proposition}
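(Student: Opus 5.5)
The proof has two parts: a generating-function computation for \(g\), and an application of Theorem~\ref{thm:odd-expansion} to \(G=K_{2n}\).

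For \eqref{eq:g-egf}, I will use the exponential formula for permutations restricted by cycle length. A permutation whose cycles all have lengths in a set \(L\) has EGF \(\exp\bigl(\sum_{\ell\in L}x^\ell/\ell\bigr)\). Taking \(L=\{3,5,7,\dots\}\), the inner sum is
\[
\sum_{\ell\text{ odd}}\frac{x^\ell}{\ell}-x=\tfrac12\log\frac{1+x}{1-x}-x .
\]
Exponentiating gives \(e^{-x}\sqrt{(1+x)/(1-x)}\). This step is routine.

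For \eqref{eq:derangement-identity}, I apply Theorem~\ref{thm:odd-expansion} to \(G=K_{2n}\). Here \(A(K_{2n})=J_{2n}-I_{2n}\), so \(\per(A)=D_{2n}\).

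\begin{itemize}
\item The empty family contributes \(\perfmat(K_{2n})^2=((2n-1)!!)^2\).
\item For a nonempty family \(\Ccal\) with \(|V(\Ccal)|=j\), the complement \(K_{2n}-V(\Ccal)\) is \(K_{2n-j}\). By Remark~\ref{rem:parity}, it has a perfect matching only when \(j\) is even, say \(j=2k\). In that case the term is \(2^{|\Ccal|}((2n-2k-1)!!)^2\).
\item Since \(\Ccal\) consists of odd cycles of length at least \(3\) and \(|\Ccal|\) is even, \(j=2k\) with \(k\ge 3\) when \(j\ge 6\). In fact the smallest possible value is \(j=6\) (two triangles), so \(k\ge 3\).
\end{itemize}

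Grouping the terms by the vertex set \(W=V(\Ccal)\) of size \(2k\), I need
\[
\sum_{\Ccal:\,V(\Ccal)=W}2^{|\Ccal|}=g(2k).
\]
This holds because each undirected cycle of length at least \(3\) has exactly two orientations. So \(2^{|\Ccal|}\) counts the permutations of \(W\) whose cycle set is \(\Ccal\) oriented, and these are exactly the permutations of \(W\) with all cycles odd of length at least \(3\). Choosing \(W\) in \(\binom{2n}{2k}\) ways then yields the stated sum.

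The one point needing care is the lower summation limit. The statement runs from \(k=2\), but the \(k=2\) term vanishes: a permutation of \(4\) points with all cycles odd of length at least \(3\) would need a single \(3\)-cycle plus a fixed point, or a \(4\)-cycle, and neither qualifies, so \(g(4)=0\). Similarly \(g(2)=0\), so including or excluding \(k=1,2\) does not change the sum. I will check this against the EGF (the coefficients of \(x^2\) and \(x^4\) vanish) and note it explicitly.

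As a sanity check, the identity also follows directly from classifying a derangement of \([2n]\) by the set of points on its odd cycles. Those points carry a permutation counted by \(g\), and the complementary points carry an all-even-cycle permutation, counted by \(((2m-1)!!)^2\) on \(2m\) points via Lemma~\ref{lem:split}. The main obstacle is only bookkeeping: making sure the \(2^{|\Ccal|}\) orientation factor matches the count of permutations, including the doubled-edge convention for \(2\)-cycles in the even part.
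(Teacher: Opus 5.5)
Your proposal is correct and follows essentially the same route as the paper: the exponential formula with cycle series \(\operatorname{arctanh}(x)-x\) gives the generating function, and Theorem~\ref{thm:odd-expansion} is applied to \(K_{2n}\) with odd-cycle families grouped by their covered \(2k\)-set, so that the orientation weights \(2^{|\Ccal|}\) sum to \(g(2k)\). Your extra observation that \(g(4)=0\), so that every contributing nonempty family covers at least six vertices, slightly sharpens the paper's remark that only the \(k=1\) term is absent, but does not change the argument.
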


\begin{proof}
Since \(A(K_{2n})=J_{2n}-I_{2n}\), the permanent \(\per(A(K_{2n}))\) counts fixed-point-free permutations of \([2n]\).  Hence
\[
        \Pi(K_{2n})=D_{2n}.
\]
Also
\[
        p(K_{2n})=(2n-1)!!.
\]
Apply Theorem~\ref{thm:odd-expansion} to \(K_{2n}\).  An odd-cycle family covering \(2k\) vertices leaves a complete graph \(K_{2n-2k}\), which has
\[
        p(K_{2n-2k})^2=((2n-2k-1)!!)^2
\]
perfect-matching pairs, with the convention that this quantity is \(1\) when \(k=n\).

For a fixed \(2k\)-element vertex set, summing the weight \(2^{|\Ccal|}\) over all undirected odd-cycle families \(\Ccal\) covering that set counts the ways to orient all cycles in the family.  This is exactly the number of permutations of the fixed set whose cycles all have odd length at least \(3\), namely \(g(2k)\).  Choosing the \(2k\)-element set gives the factor \(\binom{2n}{2k}\).  The case \(k=0\) gives the term \(((2n-1)!!)^2\).  The \(k=1\) term is absent because \(g(2)=0\): there is no permutation of a two-element set whose cycles all have odd length at least \(3\).  At the other endpoint, \(k=n\) is included, and the convention \((-1)!!=1\) records that the leftover graph is empty.  This proves \eqref{eq:derangement-identity}.

It remains to prove \eqref{eq:g-egf}.  A single directed cycle of length \(\ell\) on a labelled \(\ell\)-set contributes \(x^\ell/\ell\) to the exponential generating function.  The allowed cycle lengths are odd integers \(\ell\ge 3\), so the cycle generating function is
\[
        \sum_{\substack{\ell\ge 3\\ \ell\text{ odd}}}\frac{x^\ell}{\ell}
        =\operatorname{arctanh}(x)-x.
\]
By the exponential formula for labelled permutations as sets of cycles \cite[Ch.~II]{FlajoletSedgewick},
\[
        \sum_{j\ge 0} g(j)\frac{x^j}{j!}
        =\exp(\operatorname{arctanh}(x)-x)
        =e^{-x}\sqrt{\frac{1+x}{1-x}}.
\]
\end{proof}

\begin{example}\label{ex:complete-small}
For \(n=3\), one has \(g(6)=40\).  Thus
\[
        \delta_{\mathrm{odd}}(K_6)=\binom{6}{6}40=40,
        \qquad
        D_6=225+40=265.
\]
For \(n=4\), one has \(g(6)=40\) and \(g(8)=2688\), so
\[
        \delta_{\mathrm{odd}}(K_8)=\binom{8}{6}40+\binom{8}{8}2688=1120+2688=3808,
\]
and hence
\[
        D_8=11025+3808=14833.
\]
\end{example}

\begin{corollary}\label{cor:complete-strict}
For every \(n\ge 3\),
\[
        \perfmat(K_{2n})<U(K_{2n}).
\]
More precisely,
\[
        \delta_{\mathrm{odd}}(K_{2n})=D_{2n}-((2n-1)!!)^2>0.
\]
\end{corollary}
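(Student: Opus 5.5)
The identity \(\delta_{\mathrm{odd}}(K_{2n})=D_{2n}-((2n-1)!!)^2\) is just \eqref{eq:per-defect} specialized to \(K_{2n}\). In the proof of Proposition~\ref{prop:derangement} we already have \(\Pi(K_{2n})=\per(J_{2n}-I_{2n})=D_{2n}\) and \(p(K_{2n})=(2n-1)!!\). Substituting these into \(\Pi(G)=p(G)^2+\delta(G)\) gives the formula for the defect.

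Next I will show that this defect is positive for \(n\ge 3\), using the criterion of Proposition~\ref{prop:positive-defect-criterion}. Split the \(2n\) vertices into a triangle, a second triangle, and \(2n-6\) remaining vertices. Since \(K_{2n}\) is complete, the remaining vertices can be covered by a perfect matching (which is empty when \(n=3\)). This produces a spanning subgraph whose components are two odd cycles together with single edges, so \(\delta_{\mathrm{odd}}(K_{2n})>0\).

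As a cross-check through \eqref{eq:derangement-identity}: the defect equals \(\sum_{k=2}^n\binom{2n}{2k}g(2k)((2n-2k-1)!!)^2\), and its \(k=2\) term is at least \(\binom{2n}{4}g(4)>0\), because a permutation of four points into a \(3\)-cycle and a \(1\)-cycle is not allowed but... This shows the \(k=2\) term is not the right witness: \(g(4)=0\), since \(4\) cannot be written as a sum of odd parts \(\ge 3\). So the correct witness is the \(k=3\) term, which is \(\binom{2n}{6}g(6)((2n-7)!!)^2\ge 40>0\) for \(n\ge3\), as \(g(6)\ge 40\) by Example~\ref{ex:complete-small}. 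This matches the two-triangle construction above, and it also shows why the hypothesis \(n\ge 3\) is needed.

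Finally, \(K_{2n}\) has even order and no isolated vertices, so Corollary~\ref{cor:AF-defect} applies and gives \(\perfmat(K_{2n})\le (U^2-\delta_{\mathrm{odd}})^{1/2}<U(K_{2n})\). The only point that needs care is identifying the correct smallest nonvanishing odd-cycle family, which is two disjoint triangles and not one triangle or a \(5\)-cycle plus a single vertex; once that is settled, every step is immediate.
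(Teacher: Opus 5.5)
Your proposal is correct and follows essentially the paper's proof. You obtain the formula from \eqref{eq:per-defect} with $\Pi(K_{2n})=D_{2n}$ and $p(K_{2n})=(2n-1)!!$, you get positivity from two disjoint triangles whose complement $K_{2n-6}$ has a perfect matching, and Corollary~\ref{cor:AF-defect} then gives the strict inequality. The ``cross-check'' paragraph is not needed, and as written it opens with a false claim about the $k=2$ term (in fact $g(4)=0$) before retracting it, so in a final write-up either cite only the $k=3$ term $\binom{2n}{6}g(6)((2n-7)!!)^2\ge 40$ or drop the paragraph.
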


\begin{proof}
For \(n\ge 3\), choose two disjoint triangles in \(K_{2n}\).  They form an odd-cycle family \(\Ccal\), and the complement is \(K_{2n-6}\), which has a perfect matching.  Therefore \(\delta_{\mathrm{odd}}(K_{2n})>0\).  Corollary~\ref{cor:AF-defect} gives the strict inequality.  The formula for the defect follows from \eqref{eq:per-defect} and \(\per(A(K_{2n}))=D_{2n}\).
\end{proof}

\medskip\noindent\textbf{Asymptotic consequences for complete graphs.}

The next result shows that, for complete graphs, the odd-cycle defect accounts asymptotically for almost all of the Bregman--Minc target \(U(G)^2\).

\begin{proposition}\label{prop:complete-asymptotic}
As \(n\to\infty\),
\[
        \frac{\delta_{\mathrm{odd}}(K_{2n})}{U(K_{2n})^2}
        =1-\frac{e}{\sqrt{\pi n}}+o(n^{-1/2}).
\]
In particular,
\[
        \frac{\delta_{\mathrm{odd}}(K_{2n})}{U(K_{2n})^2}\longrightarrow 1.
\]
\end{proposition}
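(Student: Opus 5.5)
The plan is to evaluate all three quantities in closed form and then apply Stirling's formula. By Corollary~\ref{cor:complete-strict}, \(\delta_{\mathrm{odd}}(K_{2n})=D_{2n}-((2n-1)!!)^2\). Since \(K_{2n}\) is \((2n-1)\)-regular on \(2n\) vertices,
\[
        U(K_{2n})^2=\bigl((2n-1)!\bigr)^{2n/(2n-1)}=(2n-1)!\,\bigl((2n-1)!\bigr)^{1/(2n-1)}.
\]
Hence
\[
        \frac{\delta_{\mathrm{odd}}(K_{2n})}{U(K_{2n})^2}=\frac{D_{2n}}{U(K_{2n})^2}-\frac{((2n-1)!!)^2}{U(K_{2n})^2},
\]
and I will show that the first ratio is \(1+O(\log n/n)\) and the second is \(e/\sqrt{\pi n}\,(1+o(1))\). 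Since \(\log n/n=o(n^{-1/2})\), these two facts together give the statement, and the limit \(1\) follows at once.

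\textbf{Step 1: expansion of \(U(K_{2n})^2\).} Put \(m=2n-1\). Stirling's formula gives
\[
        (m!)^{1/m}=\frac{m}{e}\,(2\pi m)^{1/(2m)}\bigl(1+O(1/m^2)\bigr)=\frac{m}{e}\Bigl(1+O\bigl(\tfrac{\log m}{m}\bigr)\Bigr).
\]
Therefore
\[
        U(K_{2n})^2=\frac{(2n-1)!\,(2n-1)}{e}\Bigl(1+O\bigl(\tfrac{\log n}{n}\bigr)\Bigr).
\]

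\textbf{Step 2: the derangement ratio.} I will use the classical fact that \(D_{2n}=(2n)!/e+O(1)\); it is in fact the nearest integer to \((2n)!/e\). Combining this with Step 1,
\[
        \frac{D_{2n}}{U(K_{2n})^2}=\frac{2n}{2n-1}\Bigl(1+O\bigl(\tfrac{\log n}{n}\bigr)\Bigr)=1+O\Bigl(\frac{\log n}{n}\Bigr).
\]

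\textbf{Step 3: the matching ratio.} I will write \((2n-1)!!=(2n)!/(2^n n!)\) and apply Stirling's formula to both factorials, which gives
\[
        (2n-1)!!=\sqrt2\,(2n/e)^n\bigl(1+O(1/n)\bigr).
\]
Hence \(((2n-1)!!)^2\sim 2(2n/e)^{2n}\). On the other side, Step 1 together with \((2n-1)!\,(2n-1)\sim(2n)!\) gives
\[
        U(K_{2n})^2\sim\frac{(2n)!}{e}\sim\frac{\sqrt{4\pi n}\,(2n/e)^{2n}}{e}.
\]
Dividing,
\[
        \frac{((2n-1)!!)^2}{U(K_{2n})^2}=\frac{2e}{\sqrt{4\pi n}}\bigl(1+o(1)\bigr)=\frac{e}{\sqrt{\pi n}}+o(n^{-1/2}).
\]

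\textbf{Main obstacle.} The only delicate point is the error bookkeeping in Step 1. The factor \(((2n-1)!)^{1/(2n-1)}\) must be expanded to relative precision \(o(n^{-1/2})\), not merely to leading order, and the resulting relative error \(O(\log n/n)\) has to be carried consistently through both ratios. This is where I would be careful. Everything else is routine Stirling and the standard estimate \(|D_m-m!/e|<1\).
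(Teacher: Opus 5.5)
Your proof is correct and follows essentially the paper's route: you write \(\delta_{\mathrm{odd}}(K_{2n})=D_{2n}-((2n-1)!!)^2\), show \(D_{2n}/U(K_{2n})^2=1+O(\log n/n)\) by Stirling applied to \(((2n-1)!)^{1/(2n-1)}\) together with \(D_{2n}=(2n)!/e+O(1)\), and show \(((2n-1)!!)^2/U(K_{2n})^2\sim e/\sqrt{\pi n}\). The only cosmetic difference is that you evaluate the second ratio directly by Stirling, whereas the paper goes through \(((2n-1)!!)^2/D_{2n}\sim e\binom{2n}{n}/4^n\) and then uses the first estimate.
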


\begin{proof}
For \(K_{2n}\), every vertex has degree \(2n-1\), so
\[
        U(K_{2n})^2=((2n-1)!)^{2n/(2n-1)}.
\]
By \eqref{eq:per-defect},
\[
        \delta_{\mathrm{odd}}(K_{2n})=D_{2n}-((2n-1)!!)^2.
\]
We estimate the two terms after division by \(U(K_{2n})^2\).

First, \(D_{2n}=(2n)!/e+O(1)\).  Put \(m=2n-1\).  Since
\[
        U(K_{2n})^2=(m!)^{(m+1)/m}
        \quad\text{and}\quad
        D_{2n}\sim \frac{(m+1)!}{e},
\]
we have
\[
        \log\frac{U(K_{2n})^2}{D_{2n}}
        =\frac{1}{m}\log(m!)-\log(m+1)+1+o(1/m).
\]
Stirling's formula gives
\[
        \frac{1}{m}\log(m!)
        =\log m-1+\frac{\log(2\pi m)}{2m}+O(m^{-2}),
\]
and hence
\[
        \log\frac{U(K_{2n})^2}{D_{2n}}
        =\log\frac{m}{m+1}+\frac{\log(2\pi m)}{2m}+O(m^{-2})
        =O\left(\frac{\log n}{n}\right).
\]
Therefore
\begin{equation}\label{eq:D-over-U}
        \frac{D_{2n}}{U(K_{2n})^2}
        =1+O\left(\frac{\log n}{n}\right).
\end{equation}

Second,
\[
        (2n-1)!!=\frac{(2n)!}{2^n n!}.
\]
Since \(D_{2n}\sim (2n)!/e\) and \(\binom{2n}{n}\sim 4^n/\sqrt{\pi n}\), we get
\[
        \frac{((2n-1)!!)^2}{D_{2n}}
        \sim e\frac{\binom{2n}{n}}{4^n}
        \sim \frac{e}{\sqrt{\pi n}}.
\]
Combining this with \eqref{eq:D-over-U} gives
\[
        \frac{((2n-1)!!)^2}{U(K_{2n})^2}
        =\frac{e}{\sqrt{\pi n}}+o(n^{-1/2}).
\]
Finally,
\[
        \frac{\delta_{\mathrm{odd}}(K_{2n})}{U(K_{2n})^2}
        =\frac{D_{2n}}{U(K_{2n})^2}
        -\frac{((2n-1)!!)^2}{U(K_{2n})^2}.
\]
Since \((\log n)/n=o(n^{-1/2})\), the asserted asymptotic follows.
\end{proof}

\begin{corollary}\label{cor:p-over-U-complete}
As \(n\to\infty\),
\[
        \frac{\perfmat(K_{2n})}{U(K_{2n})}
        \sim \sqrt{e}\,(\pi n)^{-1/4}.
\]
In particular, \(\perfmat(K_{2n})/U(K_{2n})\to 0\).
\end{corollary}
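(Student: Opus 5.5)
The statement concerns only \(\perfmat(K_{2n})\) and \(U(K_{2n})\), so I will read it off the asymptotics already obtained in the proof of Proposition~\ref{prop:complete-asymptotic}. Since \(U\) is positive, it is enough to find the asymptotics of the squared ratio and then take square roots. I will write
\[
        \frac{\perfmat(K_{2n})^2}{U(K_{2n})^2}
        =\frac{((2n-1)!!)^2}{D_{2n}}\cdot\frac{D_{2n}}{U(K_{2n})^2},
\]
using \(\perfmat(K_{2n})=(2n-1)!!\).

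The second factor is \(1+O((\log n)/n)\) by \eqref{eq:D-over-U}, so it tends to \(1\). For the first factor, the proof of Proposition~\ref{prop:complete-asymptotic} already gives
\[
        \frac{((2n-1)!!)^2}{D_{2n}}\sim \frac{e}{\sqrt{\pi n}}.
\]
This follows from \((2n-1)!!=(2n)!/(2^n n!)\), from \(D_{2n}\sim(2n)!/e\), and from the central binomial asymptotic \(\binom{2n}{n}\sim 4^n/\sqrt{\pi n}\). Multiplying the two factors gives
\[
        \frac{\perfmat(K_{2n})^2}{U(K_{2n})^2}\sim\frac{e}{\sqrt{\pi n}}.
\]

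Taking positive square roots preserves asymptotic equivalence, so the ratio \(\perfmat(K_{2n})/U(K_{2n})\) is asymptotic to \(\sqrt{e}\,(\pi n)^{-1/4}\). This tends to \(0\), which is the second claim.

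The one step that needs checking is the identity \(((2n-1)!!)^2\,e/(2n)!\sim e\binom{2n}{n}/4^n\). It follows from
\[
        \frac{((2n)!)^2}{4^n (n!)^2\,(2n)!}=\frac{\binom{2n}{n}}{4^n}.
\]
Nothing else requires new estimates.
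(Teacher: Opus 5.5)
Your proposal is correct and is essentially the paper's argument. The paper simply takes square roots of the estimate \(((2n-1)!!)^2/U(K_{2n})^2\sim e/\sqrt{\pi n}\) from the proof of Proposition~\ref{prop:complete-asymptotic}, where that estimate is obtained by exactly your factorization through \(D_{2n}\), using \eqref{eq:D-over-U} and the central binomial asymptotic.
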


\begin{proof}
This follows by taking square roots of the estimate
\[
        \frac{((2n-1)!!)^2}{U(K_{2n})^2}
        \sim \frac{e}{\sqrt{\pi n}},
\]
which was proved in Proposition~\ref{prop:complete-asymptotic}.
\end{proof}


\begin{thebibliography}{99}

\bibitem{AF}
N. Alon and S. Friedland,
The maximum number of perfect matchings in graphs with a given degree sequence,
\emph{Electron. J. Combin.} \textbf{15} (2008), no.~1, Note N13, 2 pp.

\bibitem{Bregman}
L. M. Bregman,
Some properties of nonnegative matrices and their permanents,
\emph{Soviet Math. Dokl.} \textbf{14} (1973), 945--949.

\bibitem{Bona}
M. B\'ona,
\emph{Combinatorics of Permutations}, second edition,
Discrete Mathematics and its Applications, CRC Press, Boca Raton, FL, 2012.

\bibitem{FlajoletSedgewick}
P. Flajolet and R. Sedgewick,
\emph{Analytic Combinatorics},
Cambridge University Press, Cambridge, 2009.

\bibitem{Galvin}
D. Galvin,
Three tutorial lectures on entropy and counting,
arXiv:1406.7872, 2014.

\bibitem{LovaszPlummer}
L. Lov\'asz and M. D. Plummer,
\emph{Matching Theory},
AMS Chelsea Publishing, Providence, RI, 2009.

\bibitem{Minc}
H. Minc,
Upper bounds for permanents of \((0,1)\)-matrices,
\emph{Bull. Amer. Math. Soc.} \textbf{69} (1963), 789--791.

\bibitem{OEISA001818}
The OEIS Foundation Inc.,
The On-Line Encyclopedia of Integer Sequences, Sequence A001818,
published electronically at \url{https://oeis.org/A001818}, accessed July 2026.

\bibitem{Radhakrishnan}
J. Radhakrishnan,
An entropy proof of Bregman's theorem,
\emph{J. Combin. Theory Ser. A} \textbf{77} (1997), 161--164.

\end{thebibliography}
\end{document}